\numberwithin{theorem}{section}
\numberwithin{equation}{section}
\begin{document}

\title[Consistency of Coarse-grained Dynamical Atomistic Chain]{A Consistency Study of Coarse-grained Dynamical Chains through A Nonlinear Wave Equation of Mixed Type}

\author{M. Liao}
\address{Mingjie Liao\\
Department of Applied Mathematics and Mechanics\\
University of Science and Technology Beijing\\
No. 30 Xueyuan Road, Haidian District\\
Beijing 100083\\
China}
\email{mliao@xs.ustb.edu.cn}

\author{P. Lin}
\address{Ping Lin\\
Department of Mathematics\\
University of Dundee\\
Dundee, DD1 4HN, Scotland\\
United Kingdom}
\email{plin@maths.dundee.ac.uk}

\thanks{ ML and PL were partially supported by National Natural Science Foundation of China (No. 11771040, 11861131004, 91430106).}
\subjclass[2010]{65M06, 35L20, 35L67, 74J40, 74H15, 35Q74}
\keywords{mixed type wave equation, conservation law, Riemann problems,  instability, coarse-grained approximation}

\begin{abstract}
A dynamical atomistic chain to simulate mechanical properties of a one-dimensional material with zero temperature may be modelled by the molecular dynamics (MD) model. Because the number of particles (atoms) is huge for a MD model, in practice one often takes a much smaller number of particles to formulate a coarse-grained approximation. We shall mainly consider the consistency of the coarse-grained model with respect to the grain (mesh) size to provide a justification to the goodness of such an approximation. In order to reduce the characteristic oscillations with very different frequencies in such a model, we either add a viscous term to the coarse-grained MD model or apply a space average to the coarse-grained MD solutions for the consistency study. The coarse-grained solution is also compared with the solution of the (macroscopic) continuum model (a nonlinear wave equation of mixed type) to show how well the coarse-grained model can approximate the macroscopic behavior of the material. We also briefly study the instability of the dynamical atomistic chain and the solution of the Riemann problem of the continuum model which may be related to the defect of the atomistic chain under a large deformation in certain locations. 
\end{abstract}

\maketitle

\section{Introduction}
\label{sec:introduction}

Modelling and simulation of material motion plays an important role in understanding and predicting material defect behaviors, such as dislocation and dynamic fracture \cite{Gaoetal1997,Gaoetal1998b,Gaoetal1998,Tadmor1996,Curtin2003,Rudd2005}. An atomistic level model is necessary since the atomistic bond is a crucial factor in such behaviors. However, a full atomistic system is too large to handle even using the most powerful computers available. This is where coarse-grained methods are particularly useful. Problems in this area are very challenging to analysts 
and many theoretical studies so far are about steady state models. In this paper, we shall consider a dynamical atomistic model and its coarse-grained approximation.

Several approaches have been studied in recent years, most notably the quasicontinuum method \cite{Tadmor1996,Shenoy1998,Miller2002,Shapeev2011}, the virtual internal bond method \cite{Gaoetal1997,Gaoetal1998b,Gaoetal1998,Thiagarajan2004,Linshu2002}, the coarse-grained molecular dynamics \cite{Rudd1998,Rudd2000,Curtarolo2002,Zhou2002,Rudd2005,Zhou2005,Lin2007,Kulkarni2008,XLi2010}, the heterogeneous multiscale method \cite{E2004,E2007c,E2010,E2012}, and the bridging scale method \cite{Tang2006,WKLiu2010}. 
Related approaches have also been proposed for simulations involving stochastic systems \cite{Brandt2003,Katsoulakis2006b,Katsoulakis2006a}. In this paper, we consider only the deterministic, dynamical atomistic (or so-called molecular dynamics) model. However, because the number of atoms under consideration is huge, in practice, a coarse-grained model formulated by a relatively small number of particles is involved in simulation. The question is whether the simulation result is consistent with respect to the grain (or mesh) size and whether such a coarse-grained model could still be able to reproduce macroscopic properties of a material. 
Here, macroscopic properties may be represented through the continuum model associated with the atomistic or coarse-grained model.

It is pointed out in \cite{Gaoetal1997, Gaoetal1998b} and \cite{Gaoetal1998} that the prospects for this type of model in numerical simulations of material properties are highly promising. There are not many theoretical studies available. As an early attempt for theoretical justification, we analyze a one-dimensional model with the nearest neighbor interaction at zero temperature, 
and utilize the Lennard-Jones potential as the atomic interacting potential.
It is believed that considering the interaction of one atom with its nearest neighbors is a good approximation to the original finite range interaction problem. We will see that fundamental mathematical problems associated to this simple case are still far from being solved.

A defect of a large deformation may probably lead to a fracture in a material. One of such fundamental problems is that in the case of a large deformation, the type of the associated continuum model
may change from hyperbolic to elliptic. Owing to this change,  
the model may be called a nonlinear wave equation of mixed type and is ill-posed as indicated in \cite{Schaeffer1990}. 
There is almost no mathematical analysis for such mixed type continuum equations associated with the atomistic model interacted with the Lennard-Jones potential. There are some incomplete results in the sense of measure-valued solutions, but only for the case where the interacting potential is close to quadratic \cite{Demoulini1997,Necas1996}. Fortunately, a defect under a large deformation may be formulated as a specific initial value problem, which is usually called a Riemann problem of the nonlinear wave equation. Therefore techniques in analyzing the Riemann problem can be used in the analysis. 

Our main goal is to study the consistency of the coarse-grained solutions with various grain (mesh) sizes.
However, according to \cite{Balk2001} we may find that the characteristic frequencies of the atomistic MD model and its coarse-grained approximation are variant at different grain sizes. So under a standard error measure, e.g. maximum norm, the coarse-grained solutions cannot have consistency and cannot be an approximation to the solution of the atomistic model.
Also, the coarse-grained solution is not similar to the continuum solution, since at the macroscopic scale (or continuum model) we do not usually see the high frequency oscillations. It is expected that the kinetic energy corresponding to the oscillations is transferred into other energy (cf. \cite{Balk2001}). 
Thus to consider the consistency, a damping/dissipation term needs to be introduced to the atomistic model and its coarse-grained approximation to reduce the oscillations.
One possible dissipation may be a viscous term. Another possible way to lower the oscillations is applying a suitable space average. In \cite{Hou1991} it is demonstrated that for a (hyperbolic) nonlinear conservation law, the solution of a central type spatial discretization has a limit under a space average as the mesh size gets smaller and smaller.
In \cite{Yang2012}, a generalized Irving-Kirkwood formulism for the stress is proposed by systematically incorporating the spatial and temporal averaging into the expression of continuum quantities.
In this paper, besides the viscous dissipation, we also borrow the idea of the space average from \cite{Hou1991} to reduce the oscillations, so that we are able to investigate if the solution of the coarse-grained model is consistent with respect to the grain size and if the coarse-grained solution could approximate macroscopic material properties well.

\noindent\textbf{Outline.} 
In \S~\ref{sec:continuum} and \S~\ref{sec:dynamic} we will introduce the continuum model and the atomistic MD model, and analyze the steady state and the dynamic solution. In \S~\ref{sec:discrete}, the solution of the (dynamical) atomistic model or its coarse-grained approximation and its stability will be studied. We will also analyze and numerically demonstrate the consistency of the  coarse-grained model with different grain sizes in the sense of a space average and the sense of the viscous dissipation in \S~\ref{sec:discrete}. We will conclude our results in \S~\ref{sec:conclusion}. 
\def\R{\mathbb{R}}
\def\d{\textrm{d}} 

\noindent\textbf{Notation.} 
Throughout, $\R$ denotes the real numbers.
 
Let $M$ be the number of particles that are uniformly distributed in $[0,1]$ to form the reference configuration $X$. Then the distances between each pairs of nearest particles are identical and expressed as $\Delta X = \frac{1}{M-1}$. The atomic lattice scale $\varepsilon$ of the MD model is a specific value of $\Delta X$. The position in space is denoted by $x=\phi(X, t)$ (cf. Figure \ref{figs:phi}), where $\phi : [0, 1]\times[0, T]\longrightarrow \R$. In the equilibrium study $x=\phi(X)$ is independent of time $t$ .

If $\phi$ is differentiable of $X$ and $t$, we denote the deformation gradient by $F(X, t)=\frac{\partial\phi}{\partial X}$, ($F(X) = \frac{\d F}{\d X}$ in the equilibrium case), and the velocity by $v(X, t)=\frac{\partial\phi}{\partial t}$. 

Let $\Phi$ denote the site potential, $\Theta(F(X)) = \Phi( F(X), 1)$ stands for the continuum energy density, 
and we denote $\sigma=\Theta^{'}$.

\section{Continuous description and equilibrium solutions}
\label{sec:continuum}

Let $\Phi_{ij}(r_{ij})$ denote the interaction energy of atoms $i$ and $j$, where $r_{ij}=|x_i - x_j|$. We assume that the energy functions $\Phi_{ij}$ between any two atoms are identical and simplify the notation as $\Phi$. As is conventional in application of this aspect, we adopt the so-called Lennard-Jones potential

\begin{equation}
  \Phi(r, r_0) = \frac{A}4 [ - 2 (\frac{r_0}{r})^6 +
  (\frac{r_0}{r})^{12} ], 
  \label{eq:eng1}
\end{equation}
where $r_0$ is the minimal point of the potential function and $A$ is a positive scaling constant.

\begin{figure}[htb]
\begin{center}
	\includegraphics[scale=0.5]{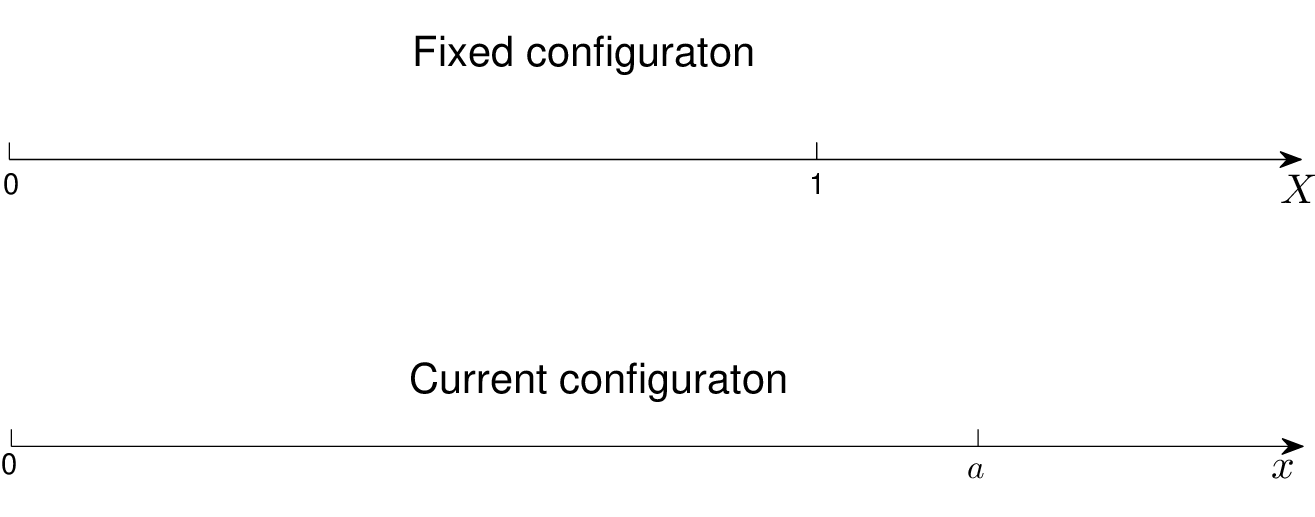}
	\caption{Deformation function $\phi$ maps the fixed configuration to the current configuration with $\phi(0)=0,\phi(1) = a$.}
	\label{figs:phi}
\end{center}
\end{figure}

\def\Ec{E_{\textrm{cont}}}

Lin and Shu \cite{Linshu2002} derived the continuum potential energy  with the virtual internal bond (VIB) method,
\begin{equation}
  \Ec = \int_0^{1}  \Theta\left( F(X)\right) \; dX,
  \label{eq:eng-cont}
\end{equation}
where

\begin{equation}
  \Theta(F(X)) = \Phi( F(X), 1),
  \label{eq:eng-dens}
\end{equation}
represents the energy density of our model. 

Hence the equilibrium configuration can be obtained from solving

\begin{equation}
  \label{eq:min_ec}
  F \in \arg\min \left\{ \Ec(F) \right\},
\end{equation}
subject to boundary conditions:

\begin{equation}
  \phi(0) = 0,\quad \phi(1) = a.
  \label{eq:cont-bc}
\end{equation}

\begin{figure}[htb]
	\centering 
	\subfloat[The graph of $\Theta(F)$.]{ 
		\label{figs:Theta} 
		\includegraphics[width=5cm]{./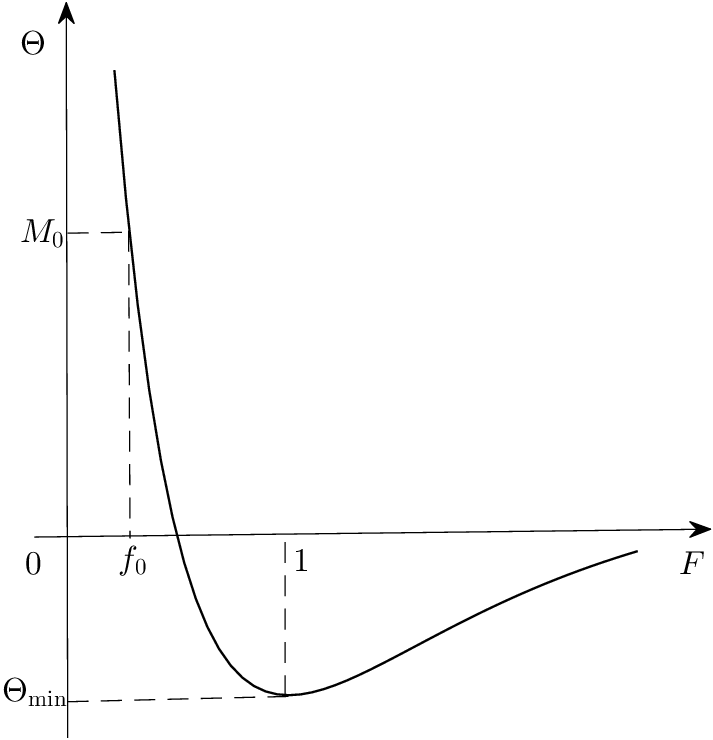}}
		\hspace{0.2cm} 
	\subfloat[The graph of $\sigma(F)$.]{
		\label{figs:sigma}
		\includegraphics[width=7cm]{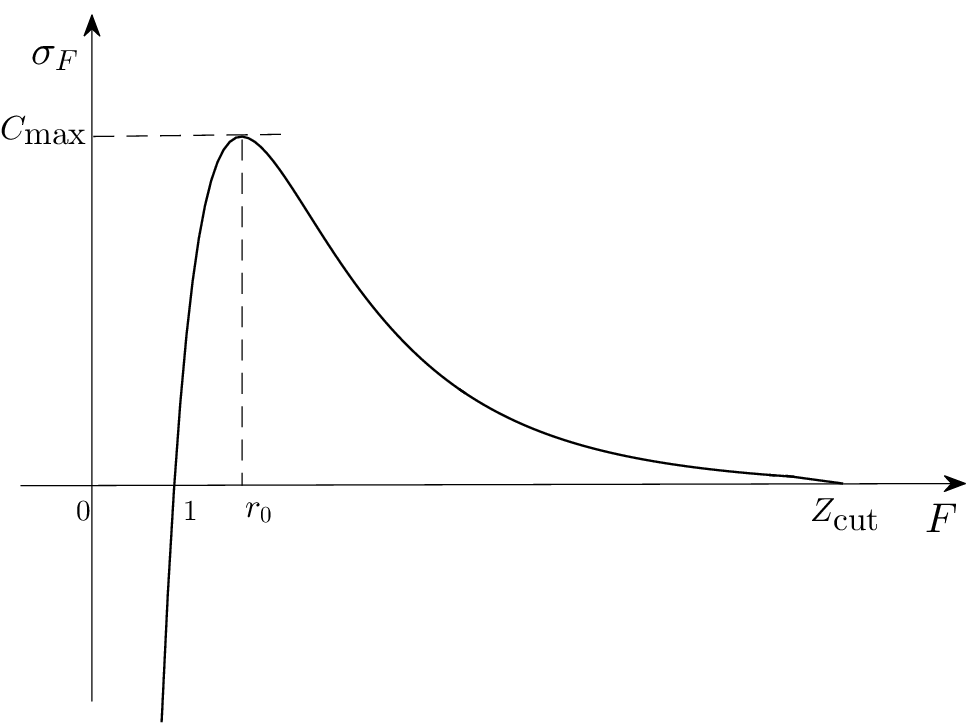}} 
		 \caption{Graphs of $\Theta(F)$ and $\sigma(F)=\Theta'(F)$.}
		\label{figs:ThetaSigma}
\end{figure}

In the case of the Lennard-Jones potential, $\Theta(F)$ and its derivatives are almost zero for a relatively large $F$, say, $F \geq z_{\textrm{cut}}$ (cf. Figure \ref{figs:ThetaSigma}). So we are only interested in the region 

\begin{equation}
  0 \leq F \leq z_{\textrm{cut}}, 
  \label{eq:Fregion}
\end{equation}
and assume that the material is broken down when $F \geq z_{\textrm{cut}}$. Elastic bars and their static theories were studied by James \cite{James1979}. In order to properly describe the total energy, $F(X)=\phi'(X)$ must exist almost everywhere and be measurable. Still, some possibilities of the function space are left open, depending upon the weakness of the derivative. It is indicated in \cite{James1979} that the best alternative seems to be the class of absolutely continuous functions. For each absolutely continuous function $\phi(X)$, there is an integrable function $F(X)$,
such that 
\begin{displaymath}
  \phi(X)=\int^X_0 \; F(Y) \; dY,
\end{displaymath}
and $F(X)$ is existing and integrable almost everywhere. The Euler equation to find the minimizer for the total energy $\Ec(\phi)$ under the class of absolutely continuous functions are given in \cite{James1979}, 

\begin{equation}
  \Theta'(F) = c, \quad \phi(0)=0,\quad \phi(1) = a, 
  \label{eq:cont-stat}
\end{equation}
where $c$ is the integration constant. The minimizer will produce the equilibrium configuration or steady-state solution. 
Note that
\begin{equation}
  \Theta'(F) = 3A [ \frac1{F^7} - \frac1{F^{13}}],
  \label{eq:engp}
\end{equation}
and denote $c_{\textrm{max}} = \max_{F\geq0} \; \Theta'(F)$. By the definition of $z_{\textrm{cut}}$, $\Theta'(z_{\textrm{cut}})$ is almost zero. For simplicity, we let $\Theta'(z_{\textrm{cut}})=0$ and denote
\begin{equation}
  \sigma(F) = \Theta'(F), \quad \mbox{ for } 0 < F \leq z_{\textrm{cut}}. 
  \label{sigmadef}
\end{equation}

The graph of $\sigma$ (Figure \ref{figs:sigma}) shows that there are two solutions for $0 \leq c < c_{\textrm{max}}$ and only one solution for $-\infty < c < 0$.
Let

\begin{displaymath}
  \{F : \sigma(F) = c\} = \left\{
              \begin{array}{ll}
         \{b_{-} \mbox{ or } b_{+}\} \mbox{ (assuming  } b_{-}<b_{+}) ,
  & 0 \leq c < c_{\rm max}; \\
         \{b\} ,  &  -\infty < c < 0. 
              \end{array} \right.
\end{displaymath}

These two cases are considered separately.

\subsubsection*{I. The case $0 \leq c < c_{\textrm{max}}$}

\vspace{0.2cm}

Denote the set measure as $l_{\pm} = \textrm{meas} \{X: F(X) = b_{\pm} \}$, which can be obtained from
the equations

\begin{align}
  l_{-} \;+\; l_{+}  & =  1,  \nonumber \\
  b_{-}l_{-} \;+\; b_{+}l_{+} & =  \phi(1)-\phi(0) = a.  
  \label{eq:lmp1}
\end{align}

It is easy to see from Figure \ref{figs:sigma} that $b_{\pm} \geq 1$. So, from \eqref{eq:lmp1},
\begin{equation}
  b_{+}=b_{+}(l_{-}+l_{+}) \geq a \geq b_{-}(l_{-} + l_{+}) = b_{-} \geq 1. 
  \label{eq:ab1}
\end{equation}

Now for a given $c \in [0, c_{\rm max}]$, $\;b_{\pm}$ are determined. From \eqref{eq:lmp1} we can have
\begin{equation}
  l_{-} = \frac{b_{+} - a}{b_{+} - b_{-}},\quad l_{+}=\frac{a-b_{-}}{b_{+}-b_{-}}.
  \label{eq:lmp-solu1}
\end{equation}

Because $a \in (b_{-},b_{+})$, $\;c$ can be further constrained (cf. Figure \ref{figs:sigma}) to

\begin{equation}
  0 \leq c \leq \sigma(a).
  \label{eq:gamma-restr}
\end{equation}

For any given $a \geq 1$, there are infinitely many solutions satisfying both the boundary conditions \eqref{eq:cont-bc} ($\phi(0)=0$, $\phi(1)=a$), and the following relations

\begin{equation}
  F(X) = \left\{  \begin{array}{cl}
  b_{-}, & \{X\; |\; \textrm{meas}\{X\} = l_{-}\leq 1\} ;\\
  b_{+}, &\{X \;|\; \textrm{meas}\{X\} = l_{+}=1-l_{-}\leq 1\}, 
              \end{array} \right.  
  \label{eq:solution1}
\end{equation}
where $0\leq\sigma(b_{\pm})=c\leq\sigma(a)$.

\subsubsection*{II. The case $-\infty < c < 0$}

\vspace{0.2cm}

In this case, $F = b < 1$. Also $b \cdot 1  =  a$. Thus, 
\begin{equation}
  a = b < 1.  
  \label{eq:al1}
\end{equation}

There is a unique solution:

\begin{equation}
  F(X) = a, \; \mbox{ or } \; \phi(X) = a X, \; X \in [0,1].  
  \label{eq:solution2}
\end{equation}

Next we briefly mention the stability of the equilibrium solution in the sense of energy minimization. In \cite{James1979} a stability concept (which was called metastability) was introduced. It corresponds to the local stability in the
dynamical system context, that is, an equilibrium solution $\psi$ is called metastable or locally stable if

\begin{displaymath}
  E_{\textrm{cont}}(\psi) \leq E_{\textrm{cont}}(\phi),
\end{displaymath} 
where $\phi$ is absolutely continuous and 

\begin{displaymath}
  |\phi'(X) - \psi'(X)| \leq \epsilon_X \quad a.e. 
\end{displaymath}
Note that the constant $\epsilon_X$ should be sufficiently small so that $0\leq \psi'(X)+\epsilon_X \leq z_{\textrm{cut}}$. 

If further we have $E_{\textrm{cont}}(\psi) < E_{\textrm{cont}}(\phi)$ for all $\phi$ belonging to the class of absolutely continuous functions, the equilibrium solution $\psi$ is called absolutely stable. Absolute stability corresponds to the global stability in the dynamical system context. An equilibrium solution $\psi$ is unstable if it is not locally stable. A necessary condition is given in \cite{James1979}, that is (noting $\sigma=\Theta'$),

\begin{displaymath}
  w(F_1,F_2)=\Theta(F_1)-\Theta(F_2)-(F_1-F_2)\sigma(F_2)=\int^{F_1}_{F_2} \sigma(\xi)d\xi - (F_1-F_2)\sigma(F_2) \geq 0 \quad a.e.
\end{displaymath}
$w(F_1,F_2)$ is called Weierstrass excess function. A necessary and sufficient condition for $w(F_1,F_2)\geq0$ is that the area under the graph of $\sigma(\cdot)$ from $F_2$ to $F_1$ exceeds the area of the rectangle of base $(F_1-F_2)$ and height $\sigma(F_2)$. From this criterion, for $\psi'(X) \in [r_0, z_{\textrm{cut}}]$, the solution $\psi$ is unstable, and for $\psi'(X) \in (0, r_0)$, the solution should be locally stable. As for the equilibrium solution $\psi$ obtained in the case of $0 \leq c \leq c_{\textrm{max}}$, if $l_{+} \ne 0$ then $\psi$ is unstable; otherwise, the solution should be locally stable. E and Ming \cite{E2007b} established a sharp criterion for the regime of validity, which is analogous to our results here for one dimensional equations, with some regularity assumptions on the solutions.

According to the explanation of phase transition phenomena in \cite{Balk2001}, two states $b_{\pm}$ 
in the case of $0 \leq c < c_{\textrm{max}}$ do not cause phase transition since one of the states is not locally stable.

\section{Dynamical model}
\label{sec:dynamic}

\def\HCG{\mathcal{H}_{\textrm{CG}}}
\def\VCG{\mathcal{V}_{\textrm{CG}}}
\def\KCG{\mathcal{K}_{\textrm{CG}}}

In the dynamical system context, the total energy of the atomic chain with the nearest neighbor interaction is,

\begin{equation}
  \mathcal{H} = \frac12\sum^{N^a-1}_{i=1}\;m \dot{x}_i^2 + \frac12\sum^{N^a-1}_{i=1}\; \left[  \Phi(|x_{i+1} - x_i|, \varepsilon) + \Phi(|x_{i-1}-x_{i}|,\varepsilon) \right],
  \label{eq:eng-tot-all}
\end{equation}
where $N^a$ is the total number of atoms of the chain, $m$ is the mass of an atom.

The molecular dynamics model is obtained from Hamilton's equations

\begin{equation}
  m \ddot{x}_i =  [ \Phi'(x_{i+1} - x_i, \varepsilon) -
  \Phi'(x_{i} - x_{i-1}, \varepsilon) ], \quad i=1, \cdots, N^{\rm a}-1. 
  \label{eq:md-eqn}
\end{equation}

To simulate dynamical defect movements of materials, we study the dynamical continuum model 
associated with the static model discussed in the previous section

\begin{equation}
  \rho_0 \frac{\partial^2 \phi}{\partial t^2} = \frac{\partial}{\partial X}[\sigma (\frac{\partial \phi}{\partial X})],
  \label{eq:cont-dyn}
\end{equation}
satisfying boundary conditions \eqref{eq:cont-bc} and initial conditions

\begin{equation}
  \phi(X,0) = \phi_0(X),\quad \phi_t(X,0) = v_0(X). 
  \label{eq:cont-ic}
\end{equation}

Here the density $\rho_0=m/r_0$ and $\phi$ is not only a function of $X$ but also a function of $t$. This is the same as the VIB model given in \cite{Gaoetal1997,Gaoetal1998b,Linshu2002}. We should note that $\sigma'=\Theta''$ could be negative when a deformation is large, i.e. $F>r_0$. The equation changes from hyperbolic to elliptic depending on the sign of $\sigma'$. Therefore the dynamical continuum equation is of mixed type and there is almost no mathematical analysis available in general. 

\begin{remark}
\label{rmk:ham}
In some finite temperature dynamical models, for example \cite{Blanc2010,Tadmor2013} and \cite{Kim2014}, the Hamiltonian applying the coarse-grained potential energy has the following form

\begin{equation}
  \label{eq:hcg}
  \HCG = \KCG + \VCG.
\end{equation}

The kinetic energy is (cf. \cite[Equation (53)]{Tadmor2013}),

\begin{equation}
  \label{eq:kcg}
  \KCG = \sum_{i=1}^N\frac{\|M_{i}x_i\|^2}{2M_{i}}, 
\end{equation}
where $M_i$ is the lumped-mass of the uniform coarse-grained mesh.

The coarse-grained potential energy is (cf. \cite[Equation (2.32)]{Blanc2010}),

\begin{equation}
 \label{eq:vha}
  \VCG =  \sum_{e}^{{n}_{\rm elem} } \left[\nu_{e}\Theta(F_{e}) + \frac{K_BT(\nu_{e}-1)}{2}\ln{\Theta''(F_e)} +
  \frac{K_BT(\nu_{e}-1)}{2}\ln{\frac{1}{2\pi K_BT}} + \frac{K_BT}{2}\ln{N} \right]
\end{equation} 
where $n_{\rm elem} = M -1$ is the number of elements, $K_B$ is the Boltzmann's constant, $T$ stands for the temperature, $\nu_e$ is the number of atoms associated with element $e$, $F_e$ means the deformation gradient of element $e$.

Consider the zero-temperature situation, i.e. $T=0$, the Hamiltonian is written as,
\begin{equation}
  \label{eq:sHCG}
  \HCG = \sum_{i=1}^N\frac{\|M_{i}x_i\|^2}{2M_{i}} + \sum_{e}^{{n}_{\rm elem}}\nu_{e}\Theta(F_{e}).
\end{equation}

Thus, the corresponding dynamics model is
\begin{equation}
  \label{eq:cgmd-eqn}
  m\ddot{x}_{i} = \frac{\Theta'(|x_{i+1} - x_i|)-\Theta'(|x_i - x_{i-1}|)}{\Delta X}, \quad i=1, \cdots, N-1 .
\end{equation}

We could see that \eqref{eq:md-eqn} is a special case of \eqref{eq:cgmd-eqn} with $\Delta X = \varepsilon$.

\end{remark}

In simulation, assuming that the defect (a larger deformation) locates uniformly in certain region, the problem could be simplified to the Riemann problem of \eqref{eq:cont-dyn}. As in \cite{Linshu2002}, we apply techniques for the Riemann problem of conservation laws (cf. \cite{Smoller1983,Gaoetal1997}) and for conservation laws of mixed type (cf. \cite{Shearer1993,Slemrod1983} and \cite{Hsiao1990}) to investigate the problem.

We employ concepts and notations from \cite{Linshu2002}. 
Without loss of generality, let $\rho_0=1$, the system can be expressed as

\begin{equation}
  F_t - v_X = 0, \quad v_t - \sigma(F)_X = 0, 
  \label{eq:system}
\end{equation}
or

\begin{equation}
  U_t + P(U)_X = 0,\mbox{   with }\;
  U=\begin{pmatrix} F \\ v \end{pmatrix} \; \mbox{ and } \;
  P(U)=\begin{pmatrix}-v \\ -\sigma(F) \end{pmatrix}, 
\label{eq:system'}
\end{equation}
where $t>0$ and $0\leq X\leq 1$. Consider the system \eqref{eq:system} or \eqref{eq:system'} satisfying the boundary conditions \eqref{eq:cont-bc}. We assume the following Riemann initial conditions

\begin{equation}
  U(X,0) = \left\{ \begin{array}{ll} U_l, & \mbox{if}\; X \in [0, \frac12
  - \delta)\\ U_m, & \mbox{if}\; X \in [\frac12-\delta, \frac12+\delta]\\
  U_r, & \mbox{if}\; X \in (\frac12 + \delta, 1] \end{array} \right.
  \label{eq:riemanninit}
\end{equation}
where $U_l = \left(1, v_l(X, 0) \right)$, $U_m = ( 1+\eta , v_m(X, 0))$ and $U_r = ( 1 , v_r(X, 0))$. Here $\eta$ stands for the initial defect. The initial conditions for $F$ are physically reasonable since a material would usually be in its stable configuration (i.e., $F=1$ where the potential energy reaches the minimum) until the defect is initially forced by external effects. For a solid material, it is also reasonable to assume that the velocities are initially the same at any part of the material, i.e., $v_l(X, 0) = v_m(X, 0) = v_r(X, 0)$. Without loss of generality, we assume

\begin{displaymath}
  v_l(X, 0)=v_m(X, 0)=v_r(X, 0) = 0.
\end{displaymath}

Note that \eqref{eq:system} or \eqref{eq:system'} is a system of conservation laws of mixed type since $\sigma'(F)$ changes its sign when $F$ passing $r_0$. Next we consider the solution of the Riemann problem based on the idea of admissible conditions given in \cite{Shearer1993,Hsiao1990,Smoller1983}. Whenever $\sigma'(F)\geq 0$, we use the notation
\begin{equation}
  \chi(F) = (\sigma'(F))^{1/2} \geq 0. 
  \label{eq:halfpowersigmap}
\end{equation}

The characteristic values of (\ref{eq:system}) or (\ref{eq:system'}) are $\lambda_{\pm} = \pm \chi(F)$, with associated eigenvectors $\nu_{+}(F) = (1, -\chi(F))$ and $\nu_{-}(F)=(1,\chi(F))$, respectively. The system is strictly hyperbolic in the region $D_H= \{F: \; F < r_0 \}$, where the characteristic values are real and distinct, and elliptic in the region $D_E = \{F: \; F \geq r_0 \}$, where the characteristic values are purely imaginary. In our Riemann problem, when $1+\eta < r_0$, $U_l \;, \; U_r \;,\;U_m $ all belong to $ D_H$, when $1+ \eta > r_0$, $U_l \;, \; U_r \in D_H$ and $U_m \in D_E$.

A state $U_l = (F_l, v_l)$ may be joined to a state $\bar{U}=(F, v)$ by a (centered) shock wave with a constant shock speed $s$ if and only if $U_l$, $\bar{U}$ and $s$ are related by the Rankine-Hugoniot conditions (cf. \cite{Smoller1983})

\begin{align}
  & v - v_l = -s (F - F_l),\nonumber\\
  & \sigma(F) - \sigma(F_l) = -s(v -v_l). 
  \label{eq:rhcond}
\end{align}

The function 

\begin{displaymath}
  U(X,t) = \left\{ \begin{array}{ll} U_l ,& \mbox{ if } X \leq st, \\ \bar{U}, & \mbox{ if } X > st \end{array} \right. 
\end{displaymath}
is the shock solution of the system. It is well known that to have a unique solution, a certain type of admissibility criterion\footnote{although the admissible solution may not be unique for conservation laws of mixed type (cf. \cite{Shearer1986}).} needs to be satisfied; for example, the entropy criterion or the viscosity criterion for strictly hyperbolic systems. For problems of mixed type, we adopt the generalized viscosity criterion discussed by Hsiao \cite{Hsiao1990} for the double-well potential (cf. also \cite{Shearer1993}), which generalizes the usual viscosity criterion for strictly hyperbolic systems and constrains the speed of shock solutions wherever shock speeds are defined (cf. \cite{Hsiao1990,Ni1996}). The criterion is, one of the following conditions is satisfied for all $w$ between $F_l$ and $F$,

\begin{equation}
  \frac{\sigma(w)-\sigma(F_l)}{w-F_l} \leq 
  \frac{\sigma(F)-\sigma(F_l)}
  {F - F_l}  \quad \mbox{if }\; s < 0, 
  \label{eq:admis1}
\end{equation}
or

\begin{equation}
  \frac{\sigma(w)-\sigma(F_l)}{w-F_l} \geq 
  \frac{\sigma(F)-\sigma(F_l)}
  {F - F_l}  \quad \mbox{if }\; s > 0, 
  \label{eq:admis2}
\end{equation}
or

\begin{equation}
  s = 0. 
  \label{eq:admis3}
\end{equation}

Admissible shock waves satisfying \eqref{eq:admis1} are called 1-shocks, while those satisfying \eqref{eq:admis2} are called 2-shocks. Shock waves with $s=0$ are called stationary shocks.

Eliminating $v - v_l$ in \eqref{eq:rhcond}, we obtain
\begin{equation}
  s^2 = \frac{\sigma(F)-\sigma(F_l)}{F - F_l} \geq 0. 
  \label{eq:ssquare}
\end{equation}

Shearer \cite{Shearer1993} observed that no states within the elliptic region can be joined by a shock. 

From the admissible criteria \eqref{eq:admis1}-\eqref{eq:admis3} and the Rankine-Hugoniot conditions \eqref{eq:rhcond}, the following sets or curves are defined. For our Riemann problem, $U_l$ belongs to the hyperbolic region $D_H$. The set of states which can be connected to the state $U_l$ by a 1-shock with $U_l$ on the left must lie in the curve

\begin{equation}
  S_1(U_l): \quad v-v_l = -\sqrt{(F-F_l)(\sigma(F) -\sigma(F_l))}, \quad F \leq F_l .
  \label{eq:curveS1}
\end{equation}

The set of states which can be connected to the state $U_l$ by a 2-shock with $U_l$ on the left must lie in the curve
\begin{equation}
  S_2 (U_l): \quad v-v_l = -\sqrt{(F-F_l)(\sigma(F) -\sigma(F_l))}, \quad F_l \leq F \leq \bar{F}(F_l),
  \label{eq:curveS2}
\end{equation}
where $\bar{F}(F_l)$ is defined by $\sigma(\bar{F})=\sigma(F_l)$ if $F_l \in (1,r_0]$ and $\bar{F}(F_l)=z_{\textrm{cut}}$ if $F_l \leq 1$ (The dash curve connecting $R_{1}$ and $z_{\textrm{cut}}$ in Figure \ref{figs:srcurves} describes the values of $\bar{F}(F_{l})$ ). It can be easily verified that 

\begin{equation}
  \frac{d v}{d F} = -(s^2 + \sigma'(F))/2s, 
  \label{eq:scurveslope}
\end{equation}
where $s= \pm \sqrt{(\sigma(F)-\sigma(F_l))/(F - F_l)}$, $s < 0$ on the curve $S_1$ and $s>0$ on the curve $S_2$.

We now consider rarefaction-wave solutions of \eqref{eq:system} or \eqref{eq:system'}, which have the form $U=U(X/t)$. There are two families of rarefaction waves, corresponding to either characteristic family $\lambda_{-}$ or $\lambda_{+}$ (called 1-rarefaction waves or 2-rarefaction waves, respectively). As discussed by Smoller \cite{Smoller1983}, let $\xi=X/t$, with the chain rule, \eqref{eq:system'} could be written as,

\begin{equation}
  (dP-\xi I)U_{\xi} = \mathbf{0}
  \label{eq:charctR}
\end{equation}
where
\begin{displaymath}
  dP=\begin{pmatrix}
    0 & -1 \\
    -\sigma^{\prime}(F) & 0
    \end{pmatrix}	
\end{displaymath}

The solutions of \eqref{eq:charctR} are $\xi=\lambda_{\mp}$, substitute them back into \eqref{eq:charctR} to obtain

\begin{displaymath}
  \begin{pmatrix}
    -\lambda_{\mp} & -1 \\
    -\sigma^{\prime}(F) & -\lambda_{\mp}
  \end{pmatrix}
  \begin{pmatrix}
    F_{\xi} \\
    v_{\xi}
  \end{pmatrix}
  =
  \begin{pmatrix}
    0 \\
    0
  \end{pmatrix}.
\end{displaymath}

While $F_{\xi}\neq 0$, we have $v_{\xi}/F_{\xi}=-\lambda_{\mp}$. Then, if a state $U_l$ is to be joined to a state $U$ by a (centered) rarefaction wave, with the state $U$ on the right of the wave, it is necessary and sufficient that $U$ lies on the integral curve 

\begin{equation}
 \frac{dv}{dF} = -\lambda_{\mp}(F) = \pm \chi(F),
 \label{eq:rcurveslope}
\end{equation}
and $\lambda_{\mp}(F)$ is increasing along this curve from $F_l$ to $F$. By integrating (\ref{eq:rcurveslope}) through a fixed state $U_l$, the set of states $U$, to which $U_l$ may be joined by a rarefaction wave, forms curves $R_1$ (1-rarefaction waves) and $R_2$ (2-rarefaction waves), i.e.,

\begin{equation}
  R_1 (U_l): \quad v - v_l = \int^{F}_{F_l} \chi(y)dy, \quad F_l \leq F \leq r_0, 
  \label{eq:curveR1}
\end{equation}
and

\begin{equation}
  R_2 (U_l): \quad v-v_l = -\int^{F}_{F_l} \chi(y)dy, \quad F \leq F_l .
  \label{eq:curveR2}
\end{equation}

From \eqref{eq:system} and \eqref{eq:charctR}, we could also obtain

\begin{equation}
  F\left(\frac{X}{t}\right) = (\sigma')^{-1}\left(\left(\frac{X}{t}\right)^2\right) .
  \label{eq:soluF}
\end{equation}

We can easily verify $dv/dF > 0$ and $d^2v/dF^2 <0$ for the curve $R_1$, and $dv/dF <0$ and $d^2v/dF^2 >0$
for the curve $R_2$. Figure \ref{figs:srcurves} shows these curves in the $v$-$F$ plane.

\begin{figure}[htb]
\begin{center}
	\includegraphics[scale=0.6]{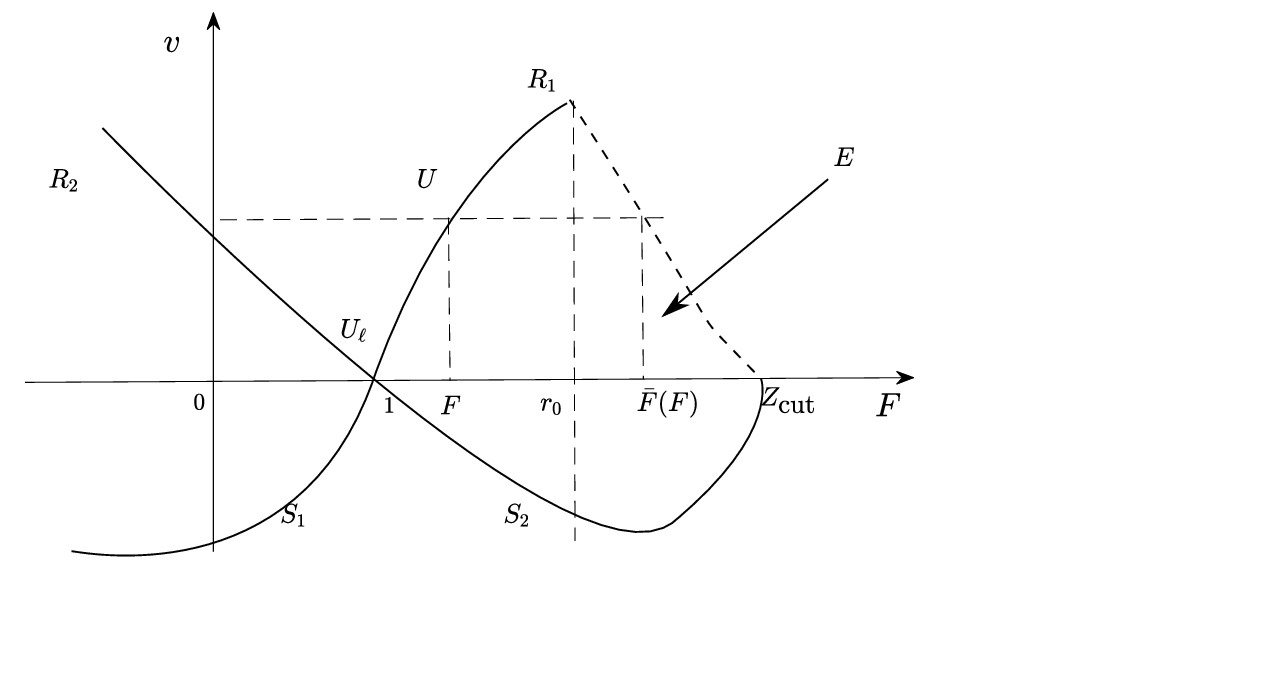}
	\caption{Illustration of curves $S_1(U_l)$, $S_2(U_l)$, $R_1(U_l)$ and $R_2(U_l)$.}
	\label{figs:srcurves}
\end{center}
\end{figure}

\begin{remark}
\label{rmk:3.2}
Since $\sigma(F)\to\sigma(F_l)$ as $F\to z_{\textrm{cut}}$, according to \eqref{eq:ssquare} $s\to 0$ as $F\to z_{\textrm{cut}}$. By \eqref{eq:scurveslope} and some calculation, we obtain

\begin{displaymath}
  \frac{d v}{d F} = -\frac{s}{2}-\frac{\sigma^{\prime}(F)}{2s} \to +\infty \quad \textrm{as} \quad s \to 0. 
\end{displaymath}
The positive sign of infinity comes from $\sigma^{\prime}(F)<0$ when $F\to z_{\textrm{cut}}$.
\end{remark}

Using notations from \cite{Smoller1983}, we define

\begin{displaymath}
  W_i (\bar{U}) = S_i(\bar{U}) \cup R_i(\bar{U}), \quad i=1,2.
\end{displaymath}

For a fixed $U_l \in \R^2$, we consider the family of curves 

\begin{displaymath}
{\mathcal F} = \{W_2(\bar{U}): \bar{U} \in W_1(U_l)\}.
\end{displaymath}

The curves of $\mathcal{F}$ has the following property.

\begin{lemma}
\label{lem:3.3}
For $U_l=(1,0)$, the curves in ${\mathcal F}$ smoothly fill a part of $v$-$F$ plane without gaps or self-intersections.
\end{lemma}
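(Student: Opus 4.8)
The plan is to recognize this as the mixed-type analogue of the classical Lax--Smoller construction of the Riemann solution by wave-curve composition, and to reduce the statement to the inverse function theorem near $U_l$ together with a global monotonicity argument. The first thing I would record are the two structural facts that make the hyperbolic machinery available at $U_l=(1,0)$. Since $1<r$ we have $c(1)=\sqrt{\sigma'(1)}>0$, so the system is strictly hyperbolic there ($\lambda_-=-c(1)<0<c(1)=\lambda_+$). Moreover genuine nonlinearity holds because $\nabla\lambda_\pm\cdot r_\pm=\pm c'(F)=\pm\sigma''(F)/(2\sqrt{\sigma'(F)})$, and using $\sigma(F)=3A(F^{-7}-F^{-13})$ one checks $\sigma''$ vanishes only at $F=(13/4)^{1/6}$, which exceeds $r=(13/7)^{1/6}$ and hence lies in the elliptic region; thus $c'(F)\neq0$ throughout the hyperbolic range $(0,r)$. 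These two facts let me quote the standard regularity of the wave curves.

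Next I would set up the composed map. From \eqref{eq:scurveslope} and \eqref{eq:rcurveslope}, the one-sided curves $S_i$ and $R_i$ join $C^2$-smoothly at their common base point (shock and rarefaction curves of the same family agree to second order under genuine nonlinearity), so each $W_i(\bar U)$ is a $C^2$ curve whose tangent at $\bar U$ is the eigenvector $r_i(\bar U)$. Indeed, letting $F\to F_l$ in \eqref{eq:scurveslope} gives limiting slope $+c(F_l)$ along $W_1$ and $-c(F_l)$ along $W_2$, so the tangent to $W_1(U_l)$ at $U_l$ is $r_-(U_l)=(1,c(1))$ and the tangent to $W_2(U_l)$ is $r_+(U_l)=(1,-c(1))$. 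I would then parametrize $W_1(U_l)$ by a smooth parameter $\epsilon_1$ (for instance $\epsilon_1=F-1$), obtaining $\bar U=\bar U(\epsilon_1)$, parametrize each $W_2(\bar U)$ by $\epsilon_2$, and set $G(\epsilon_1,\epsilon_2)=U$. The map $G$ is $C^1$ because the wave curves depend $C^1$ on their base point, and its Jacobian at $(0,0)$ is $\det\bigl[r_-(U_l),\,r_+(U_l)\bigr]=-2c(1)\neq0$. The inverse function theorem then shows $G$ is a local $C^1$ diffeomorphism, which is precisely the assertion that $\mathcal F$ fills a neighborhood of $U_l$ smoothly, with neither gaps nor self-intersections.

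To upgrade this to the semi-global statement ``a part of the $v$-$F$ plane,'' I would exploit the definite signs of the slopes. Differentiating the curve formulas gives $dv/dF>0$ everywhere along $W_1(U_l)$ (this is $c(F)>0$ on $R_1$, while on $S_1$ the expression $-(s^2+\sigma'(F))/2s$ is positive because $s<0$ and $\sigma'(F)>0$ for $F<r$), whereas $dv/dF<0$ along the $R_2$ part of each $W_2(\bar U)$ and, by the same sign analysis of \eqref{eq:scurveslope}, along its $S_2$ part as long as $\sigma'>0$. Because the base points $\bar U$ move monotonically along the increasing curve $W_1(U_l)$ while each fibre $W_2(\bar U)$ is decreasing, two distinct fibres cannot cross; this monotone transversality is what rules out self-intersections and exhibits the image as an open region foliated by the curves of $\mathcal F$.

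I expect the main obstacle to be the behavior at the edges of the filled region rather than the interior argument. The construction degenerates as $F\uparrow r$, where $c(F)\to0$ and strict hyperbolicity (hence the Jacobian bound $-2c(1)$) is lost, and as $F\uparrow z_{\textrm{cut}}$, where $s\to0$ and, by Remark~\ref{rmk:3.2}, $dv/dF\to+\infty$ on $S_2$. Controlling the $S_2$ branch once $\sigma'(F)$ turns negative---so that its slope is no longer of one sign---is the delicate point, and it is exactly these degeneracies that delimit which ``part'' of the plane is covered; the honest conclusion is that $\mathcal F$ foliates the region bounded by these critical curves, and making that boundary precise is where the genuine care is required.
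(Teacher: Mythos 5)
Your local argument (strict hyperbolicity at $U_l$, genuine nonlinearity on $(0,r)$, the composed map $G(\epsilon_1,\epsilon_2)$ and the inverse function theorem) is fine as far as it goes, but it only establishes the statement near $U_l$, and your global monotonicity argument covers exactly the part of the plane where the lemma is already classical. As you yourself concede in the last paragraph, the sign of $dv/dF$ along $S_2$ is controlled only while $\sigma'>0$; once $F\geq r$ the expression $-(s^2+\sigma'(F))/2s$ in \eqref{eq:scurveslope} loses its sign (and by Remark~\ref{rmk:3.2} even blows up to $+\infty$ as $F\to z_{\textrm{cut}}$), so ``increasing base curve, decreasing fibres'' no longer rules out crossings. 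But the elliptic region is precisely the case the lemma exists to handle: the Riemann data $U_m=(1+\eta,0)$ with $1+\eta>r$ lies there, the hyperbolic case is simply cited from Smoller in the paper, and ending with ``making that boundary precise is where the genuine care is required'' leaves the essential claim unproved. This is a genuine gap, not a technicality.

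The paper closes this case by a direct global computation rather than a slope-sign argument. Suppose two curves of ${\mathcal F}$, with base points $U_1=(F_1,v_1)$ and $U_2=(F_2,v_2)$ on $W_1(U_l)$ (WLOG $v_1<v_2$, $F_1<F_2$, using that $W_1(U_l)$ is strictly increasing), both pass through the same $U=(F,v)$ with $F\geq r$. Since points in the elliptic region are reached along $W_2$ only by 2-shocks, each curve there is given explicitly by $v=v_i-s_{+}(F_i,F)(F-F_i)$, where $s_{+}(p_1,p_2)=\sqrt{(\sigma(p_1)-\sigma(p_2))/(p_1-p_2)}$ is the square root of the chord slope of $\sigma$. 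The vertical difference of the two curves at abscissa $F$ is
\begin{equation*}
d(U_1,U_2)=v_1-v_2-\bigl(s_{+}(F_1,F)-s_{+}(F_2,F)\bigr)(F-F_1)+s_{+}(F_2,F)(F_1-F_2),
\end{equation*}
and the shape of $\sigma$ gives $s_{+}(F_1,F)>s_{+}(F_2,F)$ for $F\geq r$, so all three terms are strictly negative; hence $d\neq0$ and no self-intersection can occur, with no need to track the (non-monotone) slope of $S_2$ at all. The no-gaps part is then handled by continuous dependence of the $W_2(\bar U)$ curves on the base point together with transversality to $W_1(U_l)$. If you want to salvage your write-up, keep your local IFT step, cite the hyperbolic case, and replace your monotonicity paragraph by this chord-slope comparison for the elliptic case.
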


\begin{proof}
Let $\bar{U}$ lie in $W_1(U_l)$. Each curve is the solution of a smooth ordinary differential equation with initial values $\bar{U}$ varying smoothly. By the continuous dependence of the ODE solution on the initial data and the transversality of the curve $W_1(U_l)$ with curves in ${\mathcal F}$, there are no gaps. The proof of no-self-intersection depends on the position of connecting state $U$. If $U$ lies in hyperbolic region then it has been proved (cf. \cite{Smoller1983}). If $U$ lies in the elliptic region $E$ (see Figure \ref{figs:srcurves}), we proceed as follows. Suppose that there are two curves both passing through $U \in E$ starting from $U_1=(F_1,v_1) \in W_1(U_l)$ and $U_2=(F_2, v_2) \in W_1(U_l)$, respectively. Without loss of generality let $v_1 < v_2$ and $F_1 < F_2$ since the curve $W_1(U_l)$ is strictly increasing. Then the vertical difference of these two curves

\begin{align*}
d(U_1,U_2)&= v_1-v_2 - (s_{+}(F_1,F)(F-F_1) 
- s_{+}(F_2,F)(F-F_2))\\
& = v_1-v_2 -(s_{+}(F_1,F)-s_{+}(F_2,F))
(F-F_1) + s_{+}(F_2,F)(F_1-F_2),
\end{align*}
where $s_{+}(p_1,p_2)=\sqrt{\frac{\sigma(p_1)-\sigma(p_2)}{p_1-p_2}}$ is the square root of the slope of the chord connecting points $p_1$ and $p_2$ on the curve of $\sigma$ and $F \geq r_0$ since $U \in E$. From Figure \ref{figs:sigma}, it is easy to see that $s_{+}(F_1,F) > s_{+}(F_2,F)$ for $F \geq r_0$. Hence all terms in the expression of $d(U_1,U_2)$ are negative and no self-intersection is possible. 

\end{proof}

Lemma \ref{lem:3.3} implies there exists one unique curve  $W_2(\bar{U})$ of ${\mathcal F}$ passes through each state $U=(F,0), 1\leq F\leq z_{\rm cut}$. Then, the solution of the Riemann problem is given by first connecting $\bar{U}_1$ to $U_l$ on the right by a 1-(shock, or rarefaction) wave, then connecting $U$ to $\bar{U}_1$ on the right by a 2-(shock, or rarefaction) wave.
The particular type of occurring waves depends on the position of $U$. In our case, $U=U_m$ is on the $F$ axis. The solution of the Riemann problem is illustrated in Figures \ref{figs:rsolua} with $\eta > 0$ (cf. \cite{Smoller1983}). 

\begin{figure}[htb]
\begin{center}
	\includegraphics[scale=0.6]{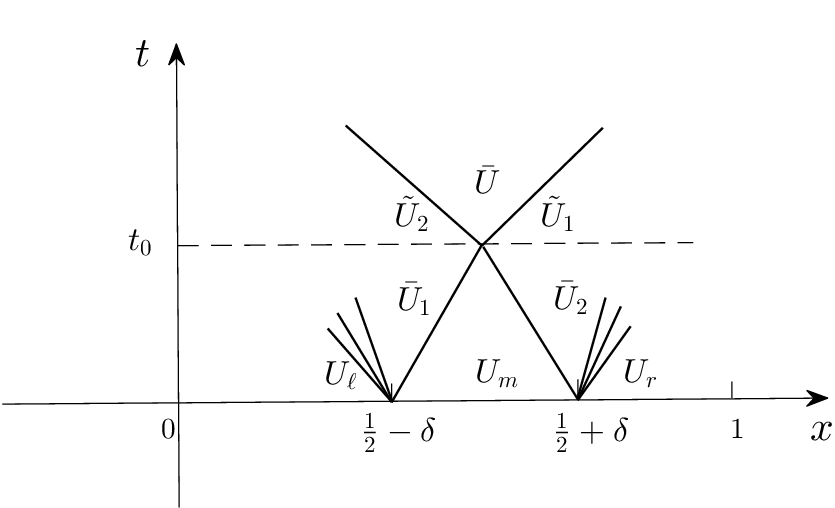}
	\caption{Solution of the Riemann problem for $\eta>0$.}
	\label{figs:rsolua}
\end{center}
\end{figure}

The Riemann solution with the initial data $U_m$ in $1/2<X < 1/2 +\delta$ and $U_r$ in $X>1/2 + \delta$ can be obtained by symmetry and is also depicted in Figures \ref{figs:rsolua}. There is an interaction of shocks at $X=1/2$ after time $t_0$. The solution of the time interval $0\leq t < t_0$ had been considered in \cite{Linshu2002}. In this paper, by considering an additional Riemann problem with initial data $\bar{U}_1$ and $\bar{U}_2$ at time $t_0$, we construct the solution of $t_0\leq t <T$. Time $T$ represents the moment when the wave reaches the outermost boundaries.

\section{Atomistic (MD) model and its coarse-grained approximation - consistency studies}
\label{sec:discrete}

We have analyzed the continuum model \eqref{eq:cont-dyn} in the previous section. In this section, we consider a typical spatially discretized model of \eqref{eq:cont-dyn} under a uniform mesh:
\begin{equation}
  \rho_0 \frac{d^2 \phi_j}{dt^2}  =
  \frac1{\Delta X}[\sigma(\frac{\phi_{j+1}-\phi_j}{\Delta X}) - \sigma(\frac{\phi_j -
  \phi_{j-1}}{\Delta X})], \quad j=1, 2, \cdots, M-1, 
  \label{eq:dis-pde}
\end{equation}
and
\begin{equation}
  \phi_0 = 0, \quad \phi_{M}= a, 
  \label{eq:dis-pde-bc}
\end{equation}
where $\Delta X = \frac{1}{M-1}$. Such a discrete model with $\Delta X =  \varepsilon$ corresponds to a mass-spring chain (cf. \cite{Balk2001}) or the MD model \eqref{eq:md-eqn}. 
Compared with the function $\sigma=\Theta'$ in \eqref{eq:engp}, the discrete model \eqref{eq:dis-pde} is actually the same as the nearest neighbor interaction MD model \eqref{eq:md-eqn}, since if $\Delta X = \varepsilon$, $\frac{1}{\varepsilon}\sigma\left(\frac{r}{\varepsilon}\right)=\Phi'(r, \varepsilon)$.
So we can regard the nearest neighbor interaction MD model as a discretization of the continuum model with a very fine mesh size $\varepsilon$.

In the rest of this section, we will briefly study the instability of the model (the atomistic model or its coarse-grained approximation) with a large deformation and consider the consistency of the coarse-grained model (discrete model with a large mesh size).

\subsection{Stability effects with a large deformation}
\label{sec:stability}

We have mentioned that the continuum model \eqref{eq:cont-dyn} is of mixed type, since the sign of $\sigma'$ would switch to negative at a large deformation gradient $F>r_0$. When $\frac{\phi_j - \phi_{j-1}}{\Delta X}$ is large, this may also happen to the discretization \eqref{eq:dis-pde}. With this sign change, the spatial operator or the matrix of the discrete problem would have positive eigenvalues. This fact implies that the linearized problem would be unstable, i.e. an exponentially increasing solution with respect to time $t$. Any numerical method would fail in this situation. Fortunately, 
in this section we will show by an example that this exponential increase is only related to the linearization of the problem. For the original nonlinear problem, the situation is better --- its solution is always bounded.

We linearize the discrete model \eqref{eq:dis-pde} at a time dependent state $F_j=F(X_j,t)=\frac{\partial\phi}{\partial X}(X_j,t)$, 

\begin{align*}
\sigma(\frac{\phi_{j+1}-\phi_{j}}{\Delta X}) & \approx \sigma(F_j) + \sigma'(F_j)(\frac{\phi_{j+1}-\phi_{j}}{\Delta X}-F_j), \\
\sigma(\frac{\phi_{j}-\phi_{j-1}}{\Delta X}) & \approx \sigma(F_j) + \sigma'(F_j)(\frac{\phi_{j}-\phi_{j-1}}{\Delta X}-F_j).
\end{align*}

Substitute them into \eqref{eq:dis-pde} to have
\begin{equation}
  \rho_0 \frac{d^2 \phi_j}{dt^2}  - \sigma'(F_j) \frac{\phi_{j+1}-2\phi_j+\phi_{j-1}}{(\Delta X)^2}=0, \quad j=1, 2, \cdots, M-1. 
  \label{eq:dis-linear}
\end{equation}

Notice that the right hand side of \eqref{eq:dis-linear} should be the truncation error $O((\Delta X)^2)$. We freeze the variable coefficient $\sigma'(F_j)$ to study the stability. Let

\begin{equation*}
\phi_j(t) = \bar{\phi}e^{\lambda t + ikj\Delta X},
\end{equation*}
with some constant $\bar{\phi}$. $\lambda$ satisfies,
\begin{equation}
\lambda_{\pm} = \sqrt{\frac{-4\sigma'}{(\Delta X)^2}\sin^2(\frac{k\Delta X}{2}) }.
\label{eq:dis-linear-l}
\end{equation}

So if $\sigma'<0$, the solution increases exponentially to infinity as $t$ increases. Fortunately, in practice, if a fracture is caused by a defect, the fracture time may be short, and during the process $|\sigma'|$ becomes smaller and smaller. This would make the exponential
growth of a solution less serious. In addition, for the original nonlinear semi-discrete model, we would not see any exponential growth of the solution. 

The reason is that the original nonlinear semi-discrete model is a Hamiltonian system and we will show below that the energy is bounded for the perturbed problem, so its solution is bounded. Hence the solution is at least not exponentially growing.

Consider \eqref{eq:dis-pde} with boundary conditions \eqref{eq:dis-pde-bc} and the perturbed initial conditions about the equilibrium solution $a X_i$:

\begin{equation}
  \phi_i(0) = a X_i + O(\varepsilon),\quad (\phi_i)_t(0) =  O(\varepsilon).
  \label{eq:dis-ic'}
\end{equation}

We consider the $O(\varepsilon)$ perturbation only to compare with the linearized case. 
For perturbations which are not small, the following argument holds as well. 

Define

\begin{equation} 
E^d(\phi^h,\phi^h_t) = \frac{\rho_0}{2M} \sum_{j=1}^{M-1}\; (\phi_j)_t^2 + \frac1{M}\sum_{j=1}^{M}\; \Theta(\frac{\phi_j - \phi_{j-1}}{\Delta X}),
\label{eq:dis-Ed}
\end{equation}
where $\phi^h = (\phi_1, \cdots, \phi_{M-1})^T$ and $\phi^h_t = ((\phi_1)_t, \cdots, (\phi_{M-1})_t)^T$. The following theorem states that the energy $E^d$ is bounded for all time. The boundedness of the energy ensures that the material would not break down. 

\begin{theorem}
\label{thm:dis-Ed}
For the solution of \eqref{eq:dis-pde} with boundary conditions \eqref{eq:dis-pde-bc} and initial conditions \eqref{eq:dis-ic'}, the energy $E^d$ does not increase with respect to the time $t$ or
\begin{equation*}
E^d(\phi^h, \phi^h_t) = E^d (\phi^h(0), \phi^h_t(0)).
\end{equation*}
\end{theorem}

\begin{proof}
Multiplying (\ref{eq:dis-pde}) by $(\phi_j)_t$ and summing the equations from $1$ to $M-1$, we have (noting $\sigma=\Theta'$)

\begin{align*}
& \rho_0 \frac12 (\sum_{j=1}^{M-1}\; (\phi_j)^2_t)_t\\
&  = \frac1{\Delta X} \sum_{j=1}^{M-1} \;\sigma(\frac{\phi_{j+1}-\phi_j}{\Delta X})(\phi_j)_t - \frac1{\Delta X} \sum_{j=1}^{M-1} \; \sigma(\frac{\phi_{j}-\phi_{j-1}}{\Delta X})(\phi_j)_t \\
&  = \frac1{\Delta X} \sum_{j=2}^{M} \; \sigma(\frac{\phi_{j}-\phi_{j-1}}{\Delta X})(\phi_{j-1})_t - \frac1{\Delta X}
\sum_{j=1}^{M-1} \; \sigma(\frac{\phi_{j}-\phi_{j-1}}{\Delta X})(\phi_j)_t \\
&  = - \sum_{j=1}^{M} \; \sigma(\frac{\phi_{j}-\phi_{j-1}}{\Delta X})(\frac{\phi_j - \phi_{j-1}}{\Delta X})_t = - \sum_{j=1}^{M} \; \Theta(\frac{\phi_{j}-\phi_{j-1}}{\Delta X})_t .
\end{align*}

By the definition of $E^d$ \eqref{eq:dis-Ed}, we obtain
\begin{equation}
  (E^d)_t = 0. 
  \label{eq:eng-ineq'}
\end{equation}

Hence, the energy $E^d$ does not increase with respect to the time $t$ or
\begin{equation}
  E^d(\phi^h, \phi^h_t) = E^d (\phi^h(0), \phi^h_t(0)). 
  \label{eq:eng-bound}
\end{equation}
Here $\phi^h = (\phi_1, \cdots, \phi_{M-1})^T$ and $\phi^h_t = ((\phi_1)_t, \cdots, (\phi_{M-1})_t)^T$. Hence the energy $E^d$ is bounded for all $t$. 
\end{proof}

The following theorem gives the bound and also the global existence of the solution of \eqref{eq:dis-pde} based on the above energy conservation Theorem \ref{thm:dis-Ed}. The existence of a global solution is the prerequisite of the consistency study.

\begin{theorem}
\label{thm:4.3}
For the solution of \eqref{eq:dis-pde} with boundary conditions \eqref{eq:dis-pde-bc} and initial conditions \eqref{eq:cont-ic}, we have
\begin{equation*}
  \|v^h\|_2 \leq \frac2{\rho_0}[E^d(\phi^h(0),v^h(0)) + |\Theta_{\rm min}|]=O(1), \; \|\phi^h\|_{\infty} \leq a , 
\end{equation*}
where $\phi^h=(\phi_1,\cdots,\phi_{M-1})^T$, $v^h=\phi^h_t= ((\phi_1)_t, \cdots, (\phi_{M-1})_t)^T$, $\|v^h\|_2 = (\frac1{M} \sum_{j=1}^{M-1}\; (\phi_j)_t^2)^{\frac12}$ and $\Theta_{\rm min}$ represents the minimum value of $\Theta$ (cf. Figure \ref{figs:Theta}). Hence there exists a global solution $\phi^h(t) \in \mathcal{C}^1$ of \eqref{eq:dis-pde}, \eqref{eq:dis-pde-bc}, \eqref{eq:cont-ic}.
\end{theorem}

\begin{proof}
The bound for $\|v^h\|_2$ results immediately from the definition of $E^d$, the energy estimate (\ref{eq:eng-bound}) and $-\Theta \leq |\Theta_{\rm min}|$. For the boundedness of $\phi_j$, we only need to show that the
atoms do not cross one another since if so, $\phi_j$ will stay between two end atoms; i.e., between $\phi_0=0$ and $\phi_M=a$. Indeed, if there is a crossing, say $\phi_{i_0}=\phi_{i_0-1}$, then from the expression of the energy $E^d(\phi^h,v^h)$, we easily see that it goes to $\infty$ at the crossing point. This is a contradiction with the boundedness (\ref{eq:eng-bound}). So $0\leq \phi_j \leq a$. This proves the boundedness of $\|\phi^h\|_{\infty}$. We then conclude that there exists a global solution $\phi_j(t) \in {\bf C}^1$ for all $t>0$\footnote{The justification of ${\bf C}^1$ is that $\Theta \in {\bf C}^1$ if $\phi_j$, $j=0,1,\cdots,M$, do not cross each other.} since the solution must go to infinity if it cannot expand at some time $t$ (cf. \cite{stuart} Theorem 2.1.4]). 
\end{proof}

So unlike the linearized problem, the solution of the original nonlinear problem will not grow to infinity. 

\subsection{Consistency of the coarse-grained model with different grain sizes}

\def\E{\mathcal{E}}

In this section, we consider whether the coarse-grained model of \eqref{eq:dis-pde} could be a good approximation to the MD model. Since the MD model is the same as the coarse-grained model with mesh size $\varepsilon$, if the coarse-grained model is a good approximation to itself with a smaller mesh size, then perhaps the coarse-grained  model  would be a good approximation to  the MD model. Unfortunately, this is not usually the case. As pointed out in \cite{Balk2001}, if we treat the discrete model as a chain of mass-spring elements in a fixed length $[0,a]$ and assume the spring has constant static stiffness, then the stiffness $k$ of each spring is proportional to the number of mass-spring elements, say $M$, in the chain, and the mass $m$ of each element is inversely proportional to $M$. Therefore the characteristic frequency $\omega$
is proportional to $M$:
\begin{displaymath}
  \omega = \sqrt{k(M)/m(M)} \sim M.
\end{displaymath}

Thus the coarse-grained model is generally not a good approximation to itself with a smaller mesh size, since the characteristic frequencies of them may be largely different (cf. \cite{Cohen2005}). From the viewpoint of shock capturing, the coarse-grained model is not a good approximation to the dynamical model either. The discrete model involves a central difference scheme, which is not recommended in finding the shock solution since it would cause oscillations near the shock discontinuity that would ultimately destroy the solution. To show this, consider the model with $\rho_0=1$, $3A=0.25$ ($A$ is the constant in \eqref{eq:eng1} and \eqref{eq:engp}) and solve the semi-discrete differential equation by MATLAB built-in ODE solver ODE45. We choose the following initial values

\begin{equation}
  F = \left\{ \begin{array}{ll} s_1 & \mbox{if}\; X\in [0, \frac12
  - \delta],\\ s_2 & \mbox{if}\; X\in(\frac12 - \delta, \frac12 + \delta),\\
  s_1 & \mbox{if}\; X\in [\frac12 + \delta, 1]. \end{array} \right.
  \label{eq:initialvalueform}
\end{equation}

Taking $s_1=0.6$ and $s_2=1.2$, Figure \ref{figs:osc} indicates that the frequencies of oscillations of $F$ and $v$ increase with rising $M$, for $M\in\{16,\;32,\;64,\;128\}$. 

\begin{figure}[htb]
\begin{center}
	\includegraphics[scale=0.6]{./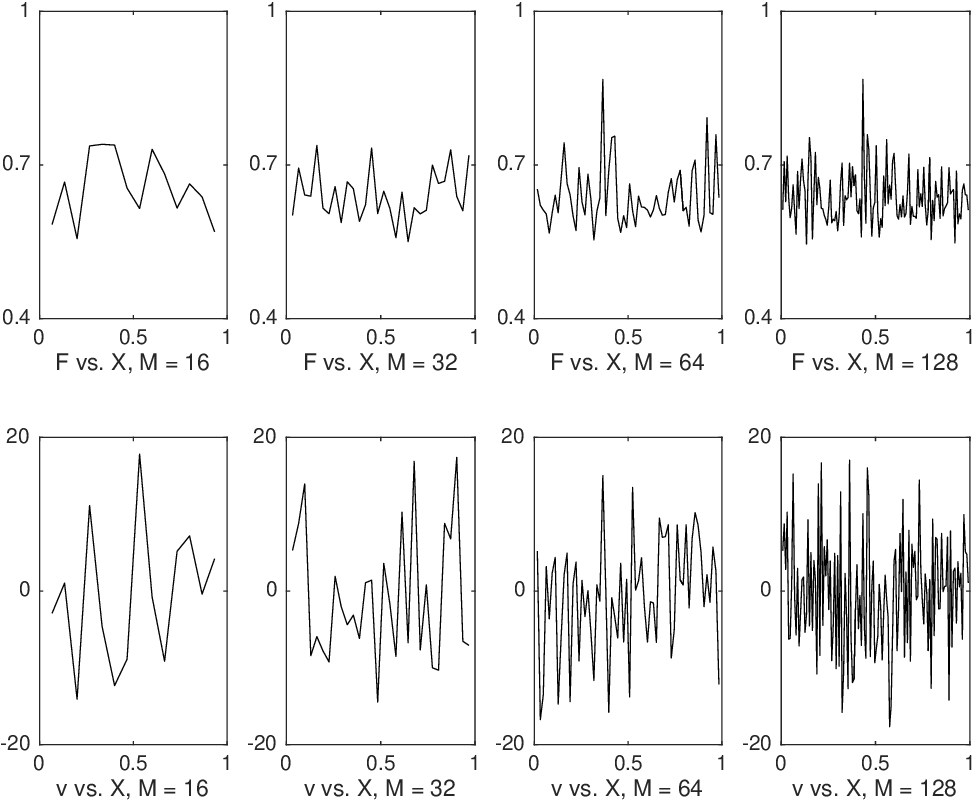}
	\caption{Graphs of $F$ vs. $X$ in the top line and $v$ vs. $X$ in the bottom line at $t=1$ for $M\in\{16,\;32,\;64,\;128\}$ with $s_1=0.6, \ s_2=1.2$ and $\delta=0.01$.}
	\label{figs:osc}
\end{center}
\end{figure}

On the other hand, as $M \rightarrow \infty$ (continuum limit) the characteristic frequency goes to infinity. The high-frequency oscillation becomes ``invisible'' in the limit. This suggests that to discuss the coarse-grained approximation, the oscillations should be reduced in some measures. In this paper, we consider two types of such measures.

\subsubsection{Coarse-grained solution in the measure of the viscous dissipation}
\label{sec:viscousDamping}

Balk et. al. \cite{Balk2001} interpreted that the kinetic energy corresponding to high-frequency oscillations will be transferred into heat. So one type of measure is to add an energy-dissipative term. According to \cite{Hsiao1990}, the admissible criterion of shock we adopted is derived from the relatively simple viscous system

\begin{align}
  &  \rho_0 v_t =  \sigma(F)_X + \mu v_{XX}, \label{eq:viscous1}\\
  &  F_t - v_X = 0,     \label{eq:viscous2}
\end{align}
or

\begin{equation}
  \rho_0 \frac{\partial^2 \phi}{\partial t^2} = \frac{\partial}{\partial X}[ \sigma(\frac{\partial \phi}{\partial X})] + \mu   \frac{\partial^3 \phi}{\partial t \partial X^2}.
  \label{eq:viscous}
\end{equation}

This kind of viscous term was also used by M\'{a}lek et. al. \cite{Necas1996} to discuss other non-convex potential energy. We thus add the spatial discretization of $\mu \frac{\partial^3 \phi}{\partial t \partial X^2}$ as a dissipative term to \eqref{eq:dis-pde}, and obtain,

\begin{equation}
  \rho_0 \frac{\partial^2 \phi_j}{\partial t^2} = \frac{1}{\Delta X}[\sigma(\frac{\phi_{j+1}-\phi_j}{\Delta X}) -\sigma(\frac{\phi_j - \phi_{j-1}}{\Delta X})] + \mu \frac{d}{dt} (\frac{\phi_{j+1}-2\phi_j + \phi_{j-1}}{(\Delta X)^2}).
  \label{eq:dis-viscous}
\end{equation}

The stability effect of a large deformation to \eqref{eq:dis-viscous} was studied in \cite{Linshu2002}. The exponential growth also happens for the linearization of \eqref{eq:dis-viscous}. Similarly to \S~\ref{sec:stability}, 
the energy $E^d$ of \eqref{eq:dis-viscous} does not increase with respect to time, which ensures the boundedness and global existence of the solution with the viscous dissipation. This implies that the exponential growth phenomenon for the linearized model will not occur for the above equation.

Analogous to the error analysis in \cite{Linshu2002}, we define
\begin{equation}
H^{d} = \frac{1}{2M} \sum_{j=1}^{M-1}\; (\E_j)_t^2 + \frac{1}{2M} \sum_{j=1}^{M}\left(\frac{\E_j-\E_{j-1}}{\Delta X}\right)^2,
\label{eq:hd}
\end{equation}
where $\E_j(t) = \phi(X_j,t)-\phi_R(X_j,t)$, as a measure of the distance between solutions, and $\phi_R(X_j,t)$ represents the reference solution.

\begin{figure}[htb]
\begin{center}
	\includegraphics[scale=0.6]{./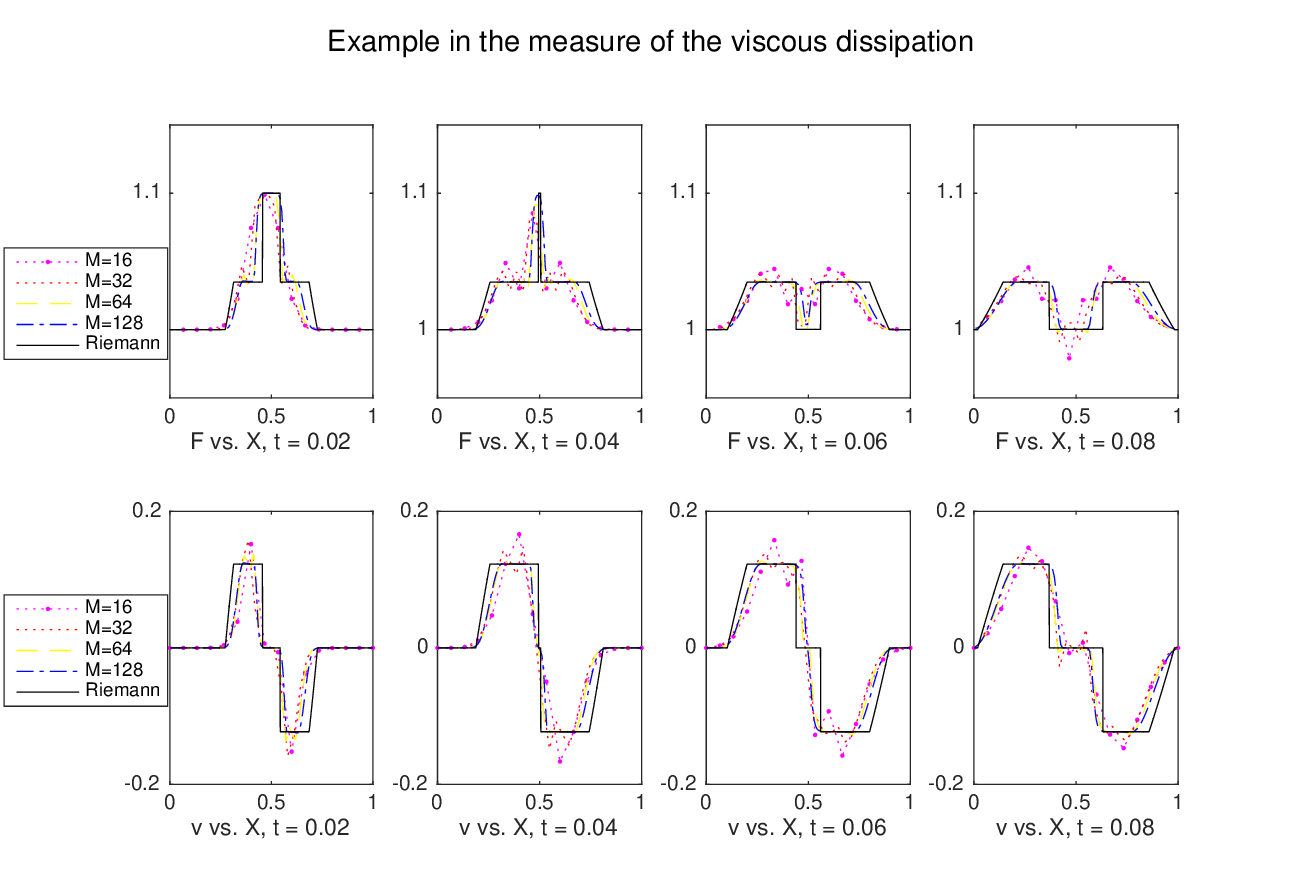}
	\caption{Numerical results with the viscous dissipation: $F$  vs. $X$ (top line) and $v$ vs. $X$ (bottom line) for $t=i \Delta t$, \ $i=1, \cdots, 4$, and $\Delta t=0.02$ with $s_1=1$, $s_2=1.1<r_0=1.1087$, $\delta = 0.1$, and $\mu = 0.01$.}
	\label{figs:vsd1-1}
\end{center}
\end{figure}

\begin{figure}[htb]
\begin{center}
	\includegraphics[scale=0.6]{./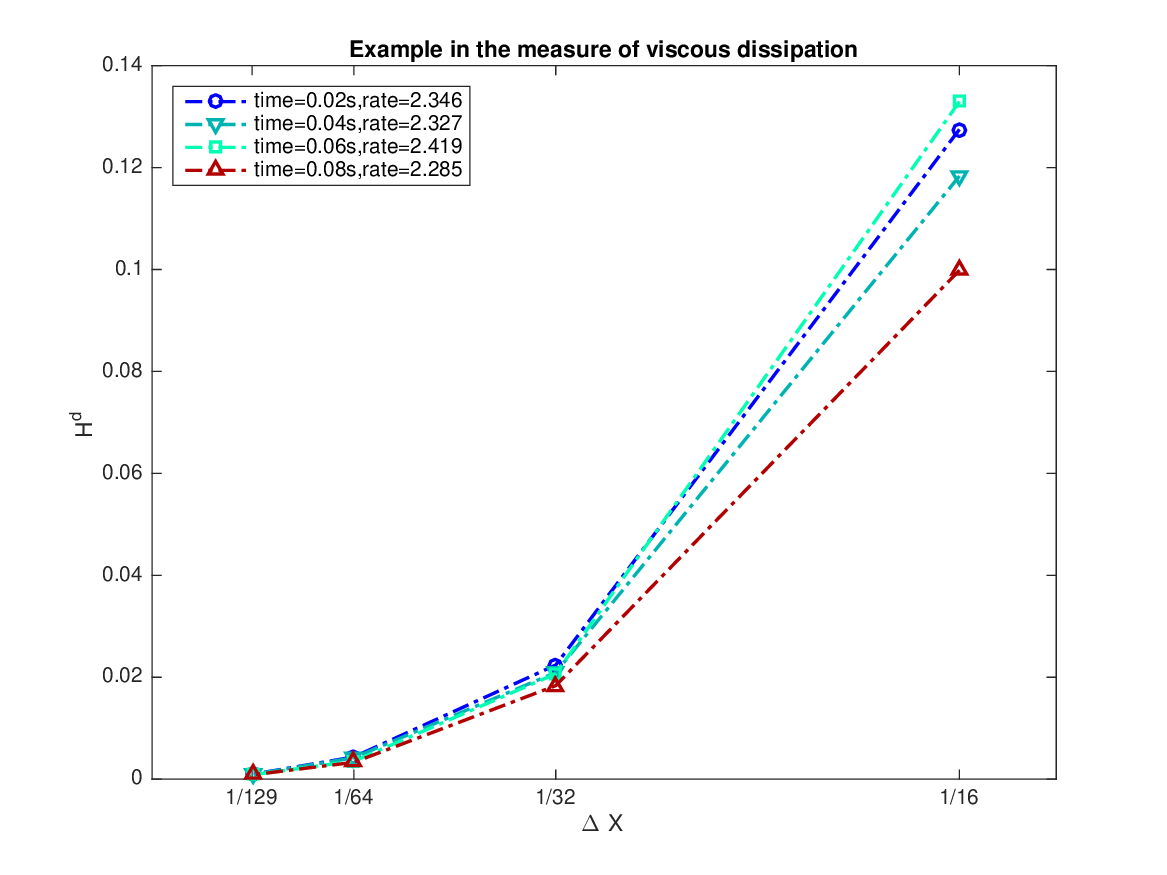}
	\caption{$H^d$  vs. $\Delta X$ in the viscous dissipation sense for $\Delta X = \frac{1}{M}, \ M \in \{16, 32, 64, 128\}$ at $t=i\Delta t, \ i = 1,\cdots, 4$, and $\Delta t = 0.02$ with $s_1=1$, $s_2=1.1<r_0=1.1087$, $\delta = 0.1$, and $\mu=0.01$.}
	\label{figs:convVsd1-1}
\end{center}
\end{figure}

\begin{figure}[htb]
\begin{center}
	\includegraphics[scale=0.6]{./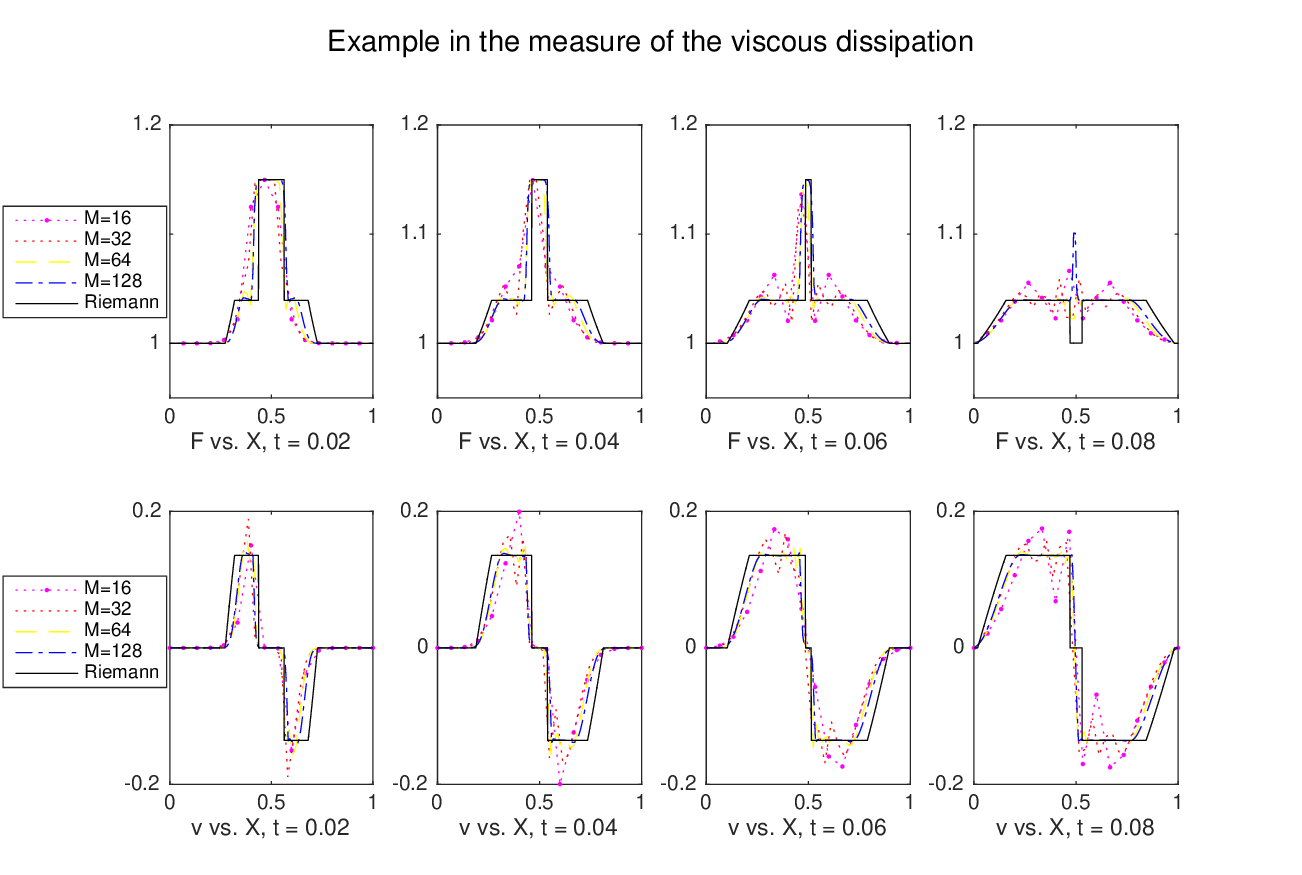}
	\caption{Numerical results with the viscous dissipation: $F$  vs. $X$ (top line) and $v$ vs. $X$ (bottom line) for $t=i \Delta t$, \ $i=1, \cdots, 4$, and $\Delta t=0.02$ with $s_1=1$, $s_2=1.15>r_0=1.1087$, $\delta = 0.1$, and $\mu=0.01$.}
	\label{figs:vsd1-15}
\end{center}
\end{figure}

\begin{figure}[htb]
\begin{center}
	\includegraphics[scale=0.6]{./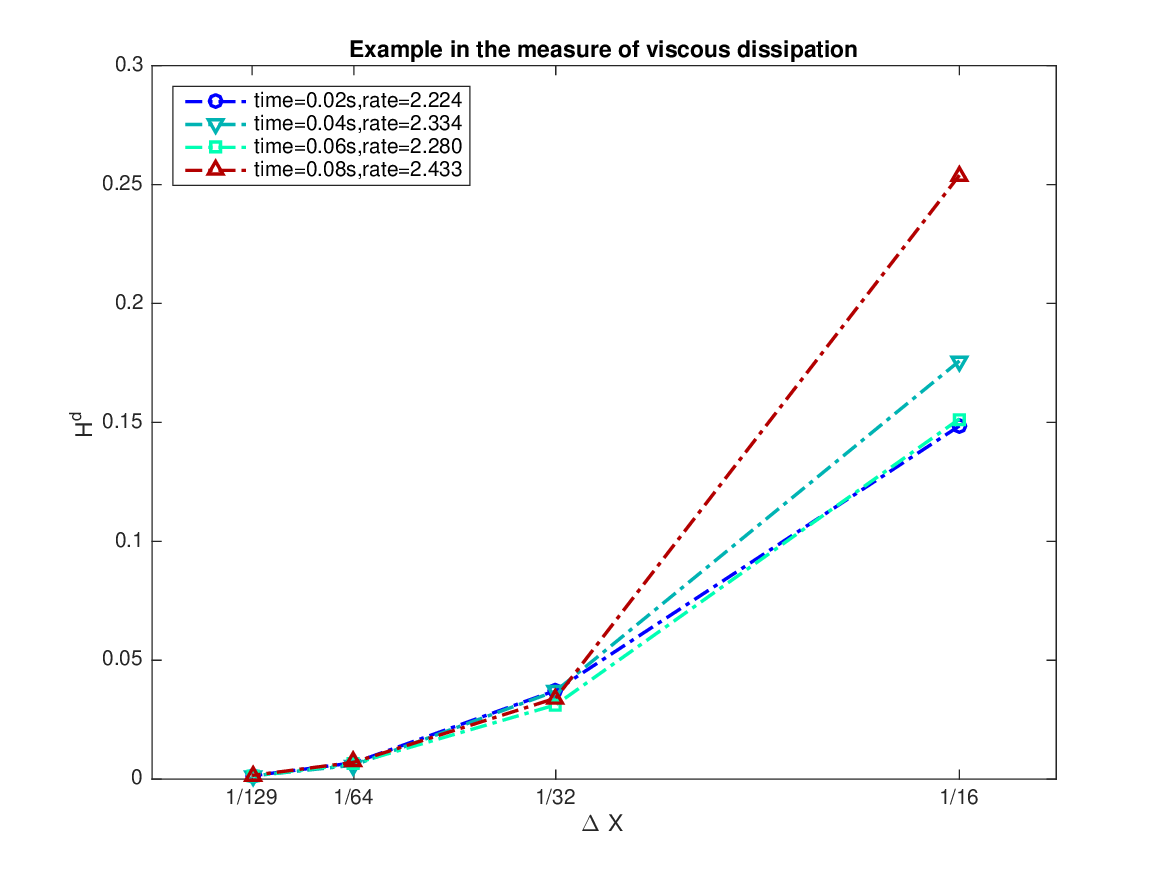}
	\caption{$H^d$  vs. $\Delta X$ in the viscous dissipation sense for $\Delta X = \frac{1}{M}, \ M \in \{16, 32, 64, 128\}$ at $t=i\Delta t, \ i = 1,\cdots, 4$, and $\Delta t = 0.02$ with $s_1=1$, $s_2=1.15>r_0=1.1087$, $\delta = 0.1$, and $\mu=0.01$.}
	\label{figs:convVsd1-15}
\end{center}
\end{figure}

\begin{figure}[htb]
\begin{center}
	\includegraphics[scale=0.6]{./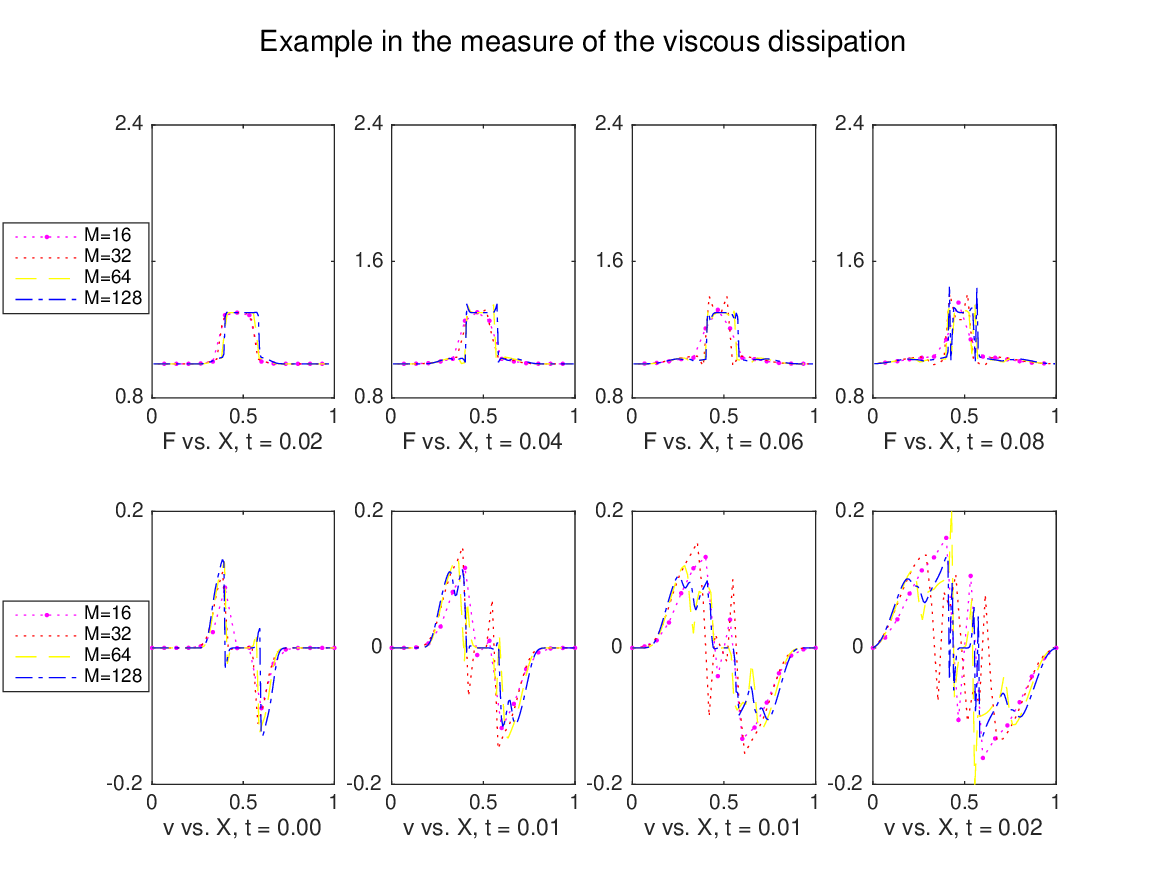}
	\caption{Numerical results with the viscous dissipation: $F$  vs. $X$ (top line) and $v$ vs. $X$ (bottom line) for $t=i \Delta t$, \ $i=1, \cdots, 4$, and $\Delta t=0.02$ with $s_1=1$, $s_2=1.3>r_0=1.1087$, $\delta = 0.1$, and $\mu=0.01$.}
	\label{figs:vsd1-3}
\end{center}
\end{figure}

The oscillations in Figure \ref{figs:vsd1-1} propagate near shocks as time increases, where the initial values of \eqref{eq:initialvalueform} are $s_1=1$ and $s_2=1.1$. The Riemann solution illustrated by Figure \ref{figs:rsolua} is computed by \eqref{eq:curveR2} and \eqref{eq:soluF} for $0\leq t<t_0$ as in \cite{Linshu2002}, and through \eqref{eq:curveS2} we calculate the Riemann solution for $t_0\leq t < T$. 
Since the coarse-grained model \eqref{eq:dis-viscous} is an approximation of the Riemann problem in the theory of hyperbolic conservation laws, the Riemann solution serves as the reference solution $\phi_R$ to compute $H^d$. Physically speaking, this comparison may indicate how well the coarse-grained solution could simulate the macroscopic feature of the material (the Riemann solution of the continuum model). 

We choose $\delta = 0.1,\ \rho_0=1,\ 3A=0.25,\ \mu=0.01$, and $M\in\{16,\;32,\;64,\;128\}$ in our numerical experiments. 
In Figure \ref{figs:vsd1-1}-\ref{figs:convVsd1-1}, we choose $s_1=1$ and $s_2=1.1$, the solution $U=(F,v)$ is always in the hyperbolic region. Figure \ref{figs:vsd1-1} describes that the coarse-grained model can approximate the main feature of the continuum model. 
We could see from Figure \ref{figs:convVsd1-1} that $H^d$ decreases with increasing $M$, which means the coarse-grained solutions get closer to the Riemann solution with greater $M$. 
In Figures \ref{figs:vsd1-15}-\ref{figs:convVsd1-15}, we keep $s_1 = 1$, and choose $s_2=1.15$ which is slightly greater than $r_0=1.1087$. At the beginning, $U$ lies partially ($X \in [1/2 - \delta, 1/2 + \delta]$) in the elliptic region. The solution goes quickly from the elliptic region to the hyperbolic region and then stays in the hyperbolic region. In this case, the coarse-grained model can still mimic the macroscopic feature. 
In Figures \ref{figs:vsd1-3}, we choose $s_1=1$ and $s_2=1.3$, the initial value of $U$ is partially in the elliptic region. 
We can see that an approximately stationary shock occurs. 
These results indicate that if the initial deformation gradients are smaller than $r_0$ or not significantly larger than $r_0$, the coarse-grained model in the measure of the viscous dissipation is consistent and reproduces the continuum solution pretty well. However, when the deformation is sufficiently large, this coarse-grained approximation is unreliable.

\subsubsection{Coarse-grained solution in the measure of a space average}
\label{sec:spaceAverage}

Theorems \ref{thm:dis-Ed} and \ref{thm:4.3} ensure the existence of a global solution of \eqref{eq:dis-pde}. Now we turn to another type of measure of the coarse-grained solution. Hou and Lax \cite{Hou1991} pointed out that the weak limits of $F$ and $v$ were determined by a suitable space average. Convoluting $F$ as a function of $X$ with a mollifier $g_{\tau}$, we obtain the average value of $F$ denoted as $F_{\textrm{avg}}$, 
\begin{equation}
  F_{\textrm{avg}} = F\ast g_{\tau},
  \label{eq:favg}
\end{equation}
where 
\begin{displaymath}
  g_{\tau}(X) s= \frac{1}{\tau}g\left(\frac{X}{\tau}\right),
\end{displaymath}
and
\begin{displaymath}
    g(X) = \left\{ \begin{array}{ll} 
    1+\cos(2\pi X),& -\frac{1}{2}\leq X \leq \frac{1}{2}; \\  
    0, & \textrm{otherwise}.
    \end{array} \right. 
\end{displaymath}

The value of $\tau$ is determined experimentally to make the support of $g_{\tau}$ small with respect to 1, and large with respect to $\Delta X$. In numerical experiments, we take $\tau = 4$ for $M=16\mbox{ or }32$, and $\tau = 6$ for $M=64 \mbox{ or }128$.

\begin{figure}[htb]
\begin{center}
	\includegraphics[scale=0.6]{./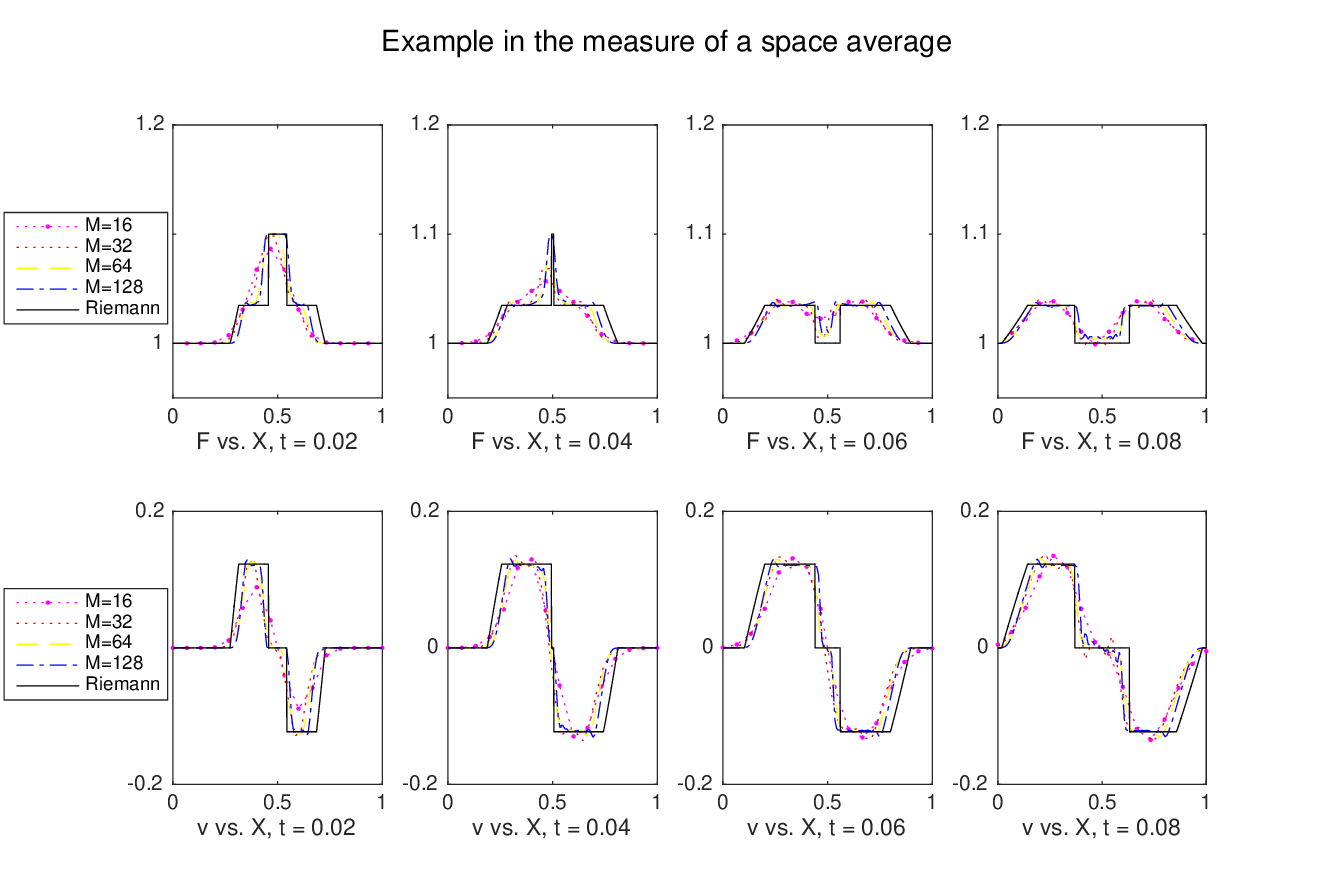}
	\caption{Numerical results with a space average: $F$  vs. $X$ (top line) and $v$ vs. $X$ (bottom line) for $t=i \Delta t$, \ $i=1,\cdots, 4$, and $\Delta t=0.02$ with $s_1=1$, $s_2=1.1<r_0=1.1087$, and $\delta = 0.1$.}
	\label{figs:avg1-1}
\end{center}
\end{figure}

\begin{figure}[htb]
\begin{center}
	\includegraphics[scale=0.6]{./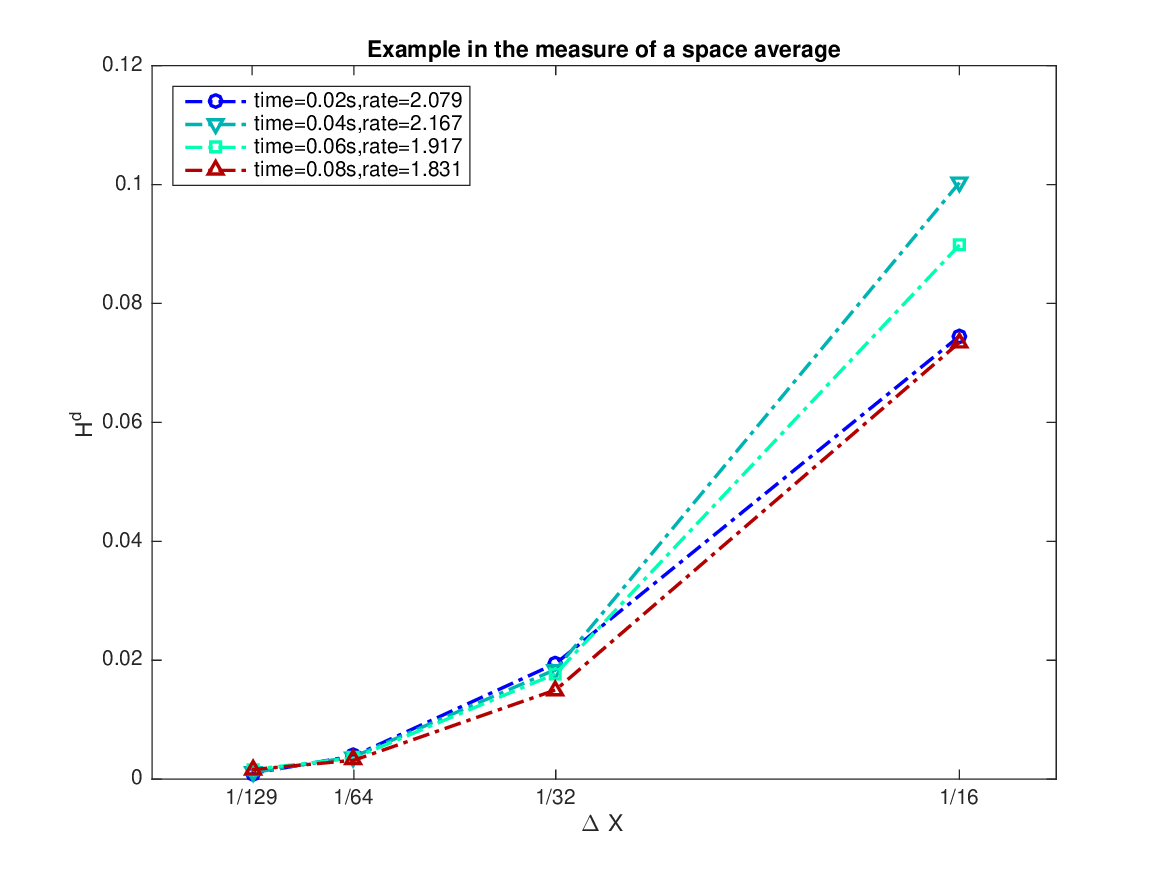}
	\caption{$H^d$  vs. $\Delta X$ in the space average sense for $\Delta X = \frac{1}{M}, \ M \in \{16,\; 32,\; 64, \;128\}$ at $t=i\Delta t, \ i = 1,\cdots, 4$, and $\Delta t=0.02$ with $s_1=1$, $s_2=1.1<r_0=1.1087$, and $\delta = 0.1$.}
	\label{figs:convAvg1-1}
\end{center}
\end{figure}

\begin{figure}[htb]
\begin{center}
	\includegraphics[scale=0.6]{./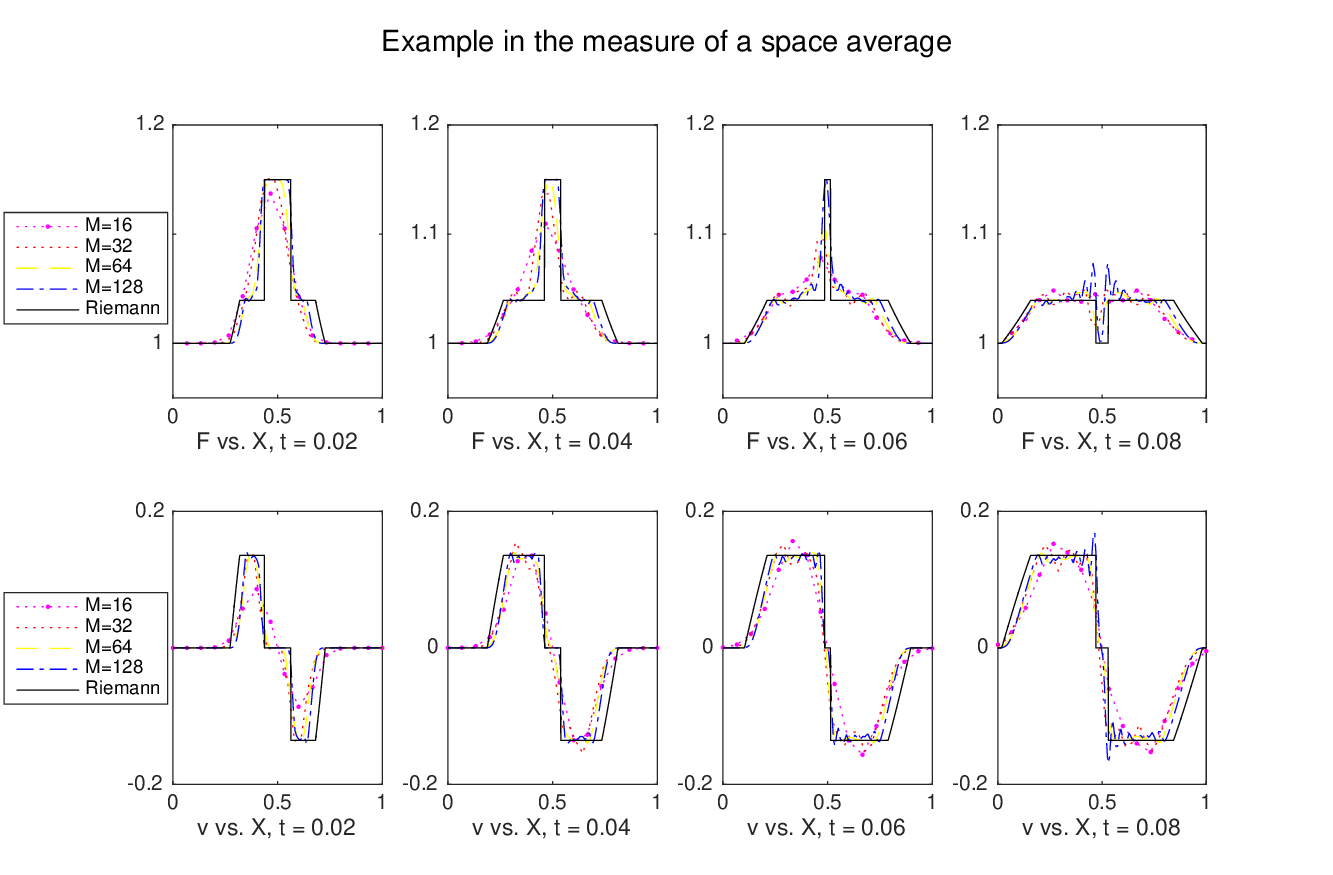}
	\caption{Numerical results with a space average: $F$  vs. $X$ (top line) and $v$ vs. $X$ (bottom line) for $t=i \Delta t$, \ $i=1, \cdots, 4$, and $\Delta t=0.02$ with $s_1=1$, $s_2=1.15>r_0=1.1087$, and $\delta = 0.1$.}
	\label{figs:avg1-15}
\end{center}
\end{figure}

\begin{figure}[htb]
\begin{center}
	\includegraphics[scale=0.6]{./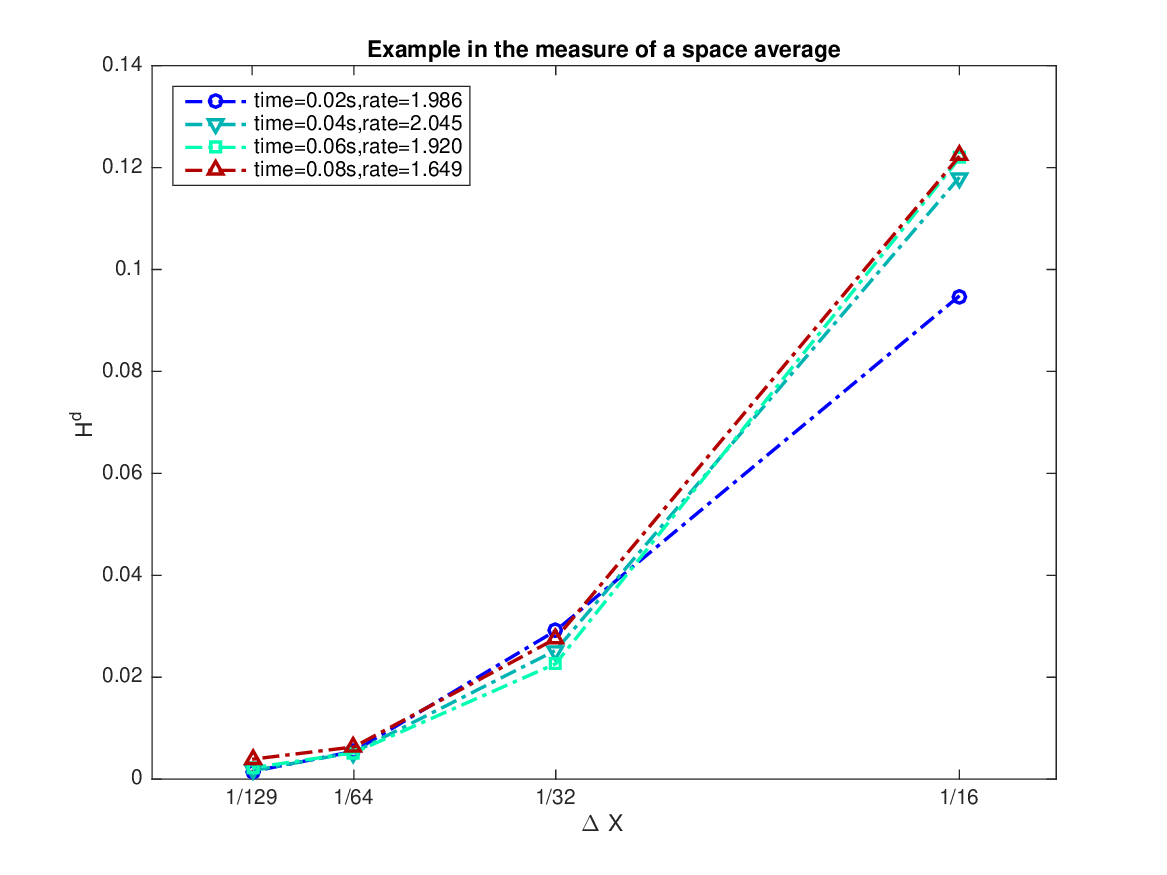}
	\caption{$H^d$  vs. $\Delta X$ in the space average sense for $\Delta X = \frac{1}{M}, \ M \in \{16,\; 32,\; 64, \;128\}$ at $t=i\Delta t, \ i = 1,\cdots, 4$, and $\Delta t = 0.02$ with $s_1=1$, $s_2=1.15>r_0=1.1087$, and $\delta = 0.1$.}
	\label{figs:convAvg1-15}
\end{center}
\end{figure}

\begin{figure}[htb]
\begin{center}
	\includegraphics[scale=0.6]{./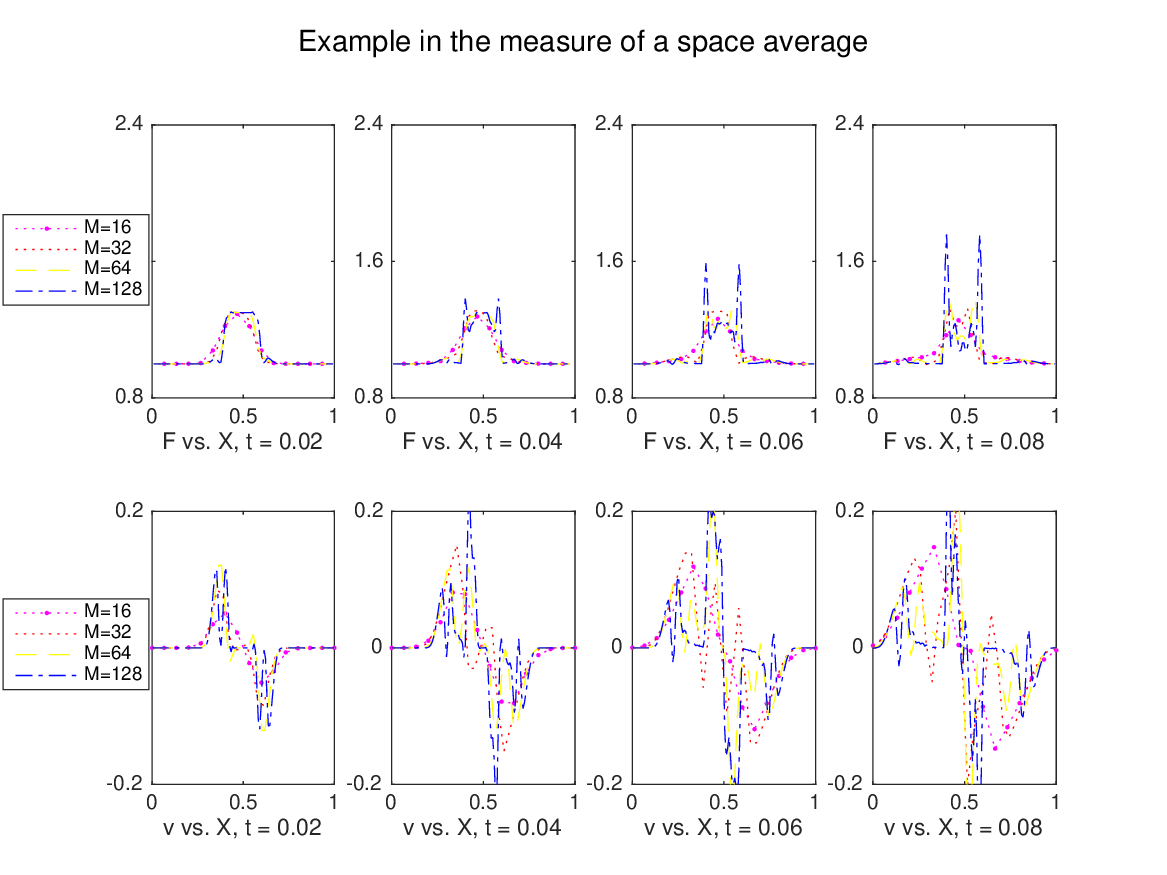}
	\caption{Numerical results with a space average: $F$  vs. $X$ (top line) and $v$ vs. $X$ (bottom line) for $t=i \Delta t$, \ $i=1, \cdots, 4$, and $\Delta t=0.02$ with $s_1=1$, $s_2=1.3>r_0=1.1087$, and $\delta = 0.1$.}
	\label{figs:avg1-3}
\end{center}
\end{figure}

When considering the solution in the measure of a space average, the MD solution ought to be the reference solution for the consistency study. However, in numerical experiments, as we did in the measure of the viscous dissipation in previous subsection, the coarse-grained solutions in the sense of a space average are also compared with the Riemann solution, although there is no theoretical result about their relationship. In addition, this comparison may also indicate how well the coarse-grained solution of the atomistic model could approximate the macroscopic feature of the material (the Riemann solution of the continuum model).

We choose the same parameters as in the examples measured with the viscous dissipation in \S~\ref{sec:viscousDamping}, i.e. $\delta = 0.1,\ \rho_0=1,\ 3A=0.25$, and $M\in\{16,\;32,\;64,\;128\}$.
In Figure \ref{figs:avg1-1}-\ref{figs:convAvg1-1}, choose $s_1=1$ and $s_2=1.1<r_0=1.1087$ so that $U=(F,v)$ stays in the hyperbolic region. 
In Figures \ref{figs:avg1-15}-\ref{figs:convAvg1-15}, we keep $s_1 = 1$, and choose $ s_2=1.15>r_0$ to make the initial values of $U$ lie in the elliptic region when $X \in [1/2 - \delta, 1/2 + \delta]$. 
For $U$ is either always in the hyperbolic region or in the elliptic region for a short time, the solutions in the measure of a space average can simulate the main properties of the continuum model, and the solutions get closer (distances are measured with $H^d$) to the Riemann solution with decreasing $\Delta X$.
In Figures \ref{figs:avg1-3}, choose $s_1=1$ and $s_2=1.3$. An approximately stationary shock occurs. 
These results indicate that the coarse-grained model in the measure of the space average is consistent and approximates the macroscopic feature pretty well if the initial deformation gradients are smaller than $r_0$ or not significantly larger than $r_0$. However, when the deformation is sufficiently large, this coarse-grained approximation is unreliable.

\section{Conclusion}
\label{sec:conclusion}

In this paper, we consider a coarse-grained approximation based on the virtual internal bond method. We focus especially on the consistency of the coarse-grained model with respect to the grain (or mesh) size. 
Since the solution of continuum model represents the macroscopic properties of the material, a defect under a uniform large deformation may be related to the Riemann problem of the continuum model (a nonlinear wave equation of mixed type).
By comparing the solutions between the Riemann problem of the continuum model and the coarse-grained model under certain measures we examine how well the coarse-grained model can approximate the macroscopic properties of the material.
 
According to our numerical results, the coarse-grained model using either the viscous dissipation or the space average seems to be a consistent approximation of the molecular dynamics model in the relatively small deformation with sufficient large $M$. But with a large deformation, there is no good approximation to the MD model in either the viscous dissipation sense or the space average sense. 

The instability of the linearized model does not prevent the existence and boundedness of the solution of the original nonlinear model even if $s_2$ is slightly greater than $r_0$. However, it may be the reason of the inconsistency of the coarse-grained model we observed in this study when $s_2$ is sufficiently greater than $r_0$.

This opens an avenue for further studies of longer range interactions, atomistic to continuum coupling model, and higher dimensional problems. Especially the problem that couples the nonlinear wave equation in a small deformation region (hyperbolic region) to an atomistic level model for a large deformation (elliptic region) to simulate the fracture dynamics. We refer to \cite{Slemrod1983,Shearer1993,Slemrod1989} and \cite{shearer93} for other choices of the dissipative term for zero temperature simulation in the future. Moreover, since the hot-QC method (cf. \cite{Blanc2010,Tadmor2013}) and the hyper-QC method (cf. \cite{Kim2014}) constructed a correction to obtain a second order accuracy in temperature, the correction offers us an idea to construct a temperature related dissipative term. The construction of the solution of Riemann problem for higher dimensions is highly challenging, but numerical studies could certainly be extended to higher dimensions and finite range interactions.

\bibliographystyle{plain}
\bibliography{nonWave.bib}

\end{document}